\def\diag{\mathop{\rm diag}\nolimits}
\def\ker{\mathop{\rm Ker}\nolimits}
\def\rank{\mathop{\rm rank}\nolimits}
\def\nrank{\mathop{\rm nrank}\nolimits}
\newcommand{\CC}{\mathbb C}
\newcommand{\eps}{\varepsilon}
\newtheorem{theorem}{Theorem}[section]
\newtheorem{remark}[theorem]{Remark}
\newtheorem{definition}[theorem]{Definition}
\newtheorem{proposition}[theorem]{Proposition}
\newtheorem{example}[theorem]{Example}
\newtheorem{lemma}[theorem]{Lemma}
\newproof{proof}{{\bf Proof}}
\newtheorem{athm}{Theorem}[section]
\newtheorem{alem}[athm]{Lemma}
\newcommand{\smtxa}[2]{
{\mbox{\scriptsize
$\left[\!\! \begin{array}{#1} #2 \end{array} \!\! \right]$}}}
\newcommand{\mtxa}[2]{\left[\!\!\begin{array}{#1} #2 \end{array}\!\!\right]}
\newcommand{\ph}{\phantom}
\newcommand{\ch}[1]{#1} 
\newcommand{\cm}[1]{#1} 
\title{Numerical methods for eigenvalues \\
of singular polynomial eigenvalue problems}
\journal{Linear Algebra Appl.}
\begin{document}
\begin{frontmatter}

\author[au1]{Michiel E.~Hochstenbach}
\ead{m.e.hochstenbach@tue.nl}
\cortext[cor1]{Version \today. Corresponding author}
\address[au1]{Department of Mathematics and Computer Science, TU Eindhoven,
PO Box 513, 5600 MB, The Netherlands.}
\author[au2]{Christian Mehl\corref{cor1}}
\ead{mehl@math.tu-berlin.de}
\address[au2]{Institut f\"{u}r Mathematik, Technische Universit\"at Berlin, Sekretariat MA 4-5,
Stra\ss e des 17.~Juni 136, 10623 Berlin, Germany.}

\author[au3]{Bor~Plestenjak}
\ead{bor.plestenjak@fmf.uni-lj.si}
\address[au3]{IMFM and Faculty of Mathematics and Physics, University of Ljubljana,
Jadranska 19, SI-1000 Ljubljana, Slovenia.}

\begin{abstract}
Recently, three numerical methods for the computation of
eigenvalues of singular matrix pencils, based on a rank-completing perturbation,
a rank-projection, or an augmentation have been developed.
We show that all three approaches can be
generalized to treat singular polynomial eigenvalue problems. The common denominator of all three approaches is a
transformation of a singular into a regular matrix polynomial whose eigenvalues
are a disjoint union of the eigenvalues of the singular polynomial, called true
eigenvalues, and additional fake eigenvalues. The true eigenvalues can then be
separated from the fake eigenvalues using information on the corresponding
left and right eigenvectors.
We illustrate the approaches on several interesting applications, including bivariate polynomial systems and ZGV points.
\end{abstract}

\begin{keyword}
Singular polynomial eigenvalue problem, perturbation, projection, augmentation, strong linearization, bivariate polynomial systems, ZGV points.
\MSC 65F15, 15A18, 15A22, 47A55, 65F22
\end{keyword}

\end{frontmatter}

\section{Introduction}\label{sec:introduction}
We consider the numerical computation of finite eigenvalues of the \emph{singular polynomial eigenvalue problem} (singular PEP) associated with a matrix polynomial
\begin{equation}\label{eq:P}
P(\lambda)=A_0+\lambda A_1 + \cdots + \lambda^d A_d
\end{equation}
of degree $d\ge 2$, where $A_0, \dotsc,A_d$, $A_d\ne 0$, are $m\times n$ (real or complex) matrices such that the polynomial $P(\lambda)$ is \emph{singular}, which means that either $m=n$ and $\det\big(P(\lambda)\big)\equiv 0$, or $m\ne n$.
Then $\lambda_0\in\CC$ is
a \emph{finite eigenvalue} of $P$ if $\rank\big(P(\lambda_0)\big)<\nrank(P)$, where
\[
\nrank(P) := \texorpdfstring{\max_{\zeta\in\CC}\,\rank\big(P(\zeta)\big)}{max rank of P}
\]
is the \emph{normal rank} of the matrix polynomial $P$. (See Definition~\ref{def:rev} for infinite eigenvalues.)
The case $d=1$, where \eqref{eq:P} is a singular matrix pencil, is covered
in \cite{HMP_SingGep} and \cite{HMP_SingGep2}. In this paper we show how we can extend the main ideas and tools to singular PEPs.

A standard approach to solve a PEP is to first linearize the matrix polynomial
into a linear matrix pencil and then compute the eigenvalues (and possibly eigenvectors) from the obtained generalized eigenvalue problem. If we apply a strong linearization
(see Section~\ref{sec:auxiliary} for details) to a singular matrix polynomial $P$,
we obtain a singular matrix pencil $L$ such that
its finite and infinite eigenvalues agree with the eigenvalues of $P$. To compute the eigenvalues of $L$ we may, e.g., apply a numerical method from \cite{HMP_SingGep} or \cite{HMP_SingGep2}, where we perturb or project $L$ into a regular matrix pencil $\widetilde L$ such that all eigenvalues of $L$ are also
eigenvalues of $\widetilde L$. Then the eigenvalues of $P$ can be extracted from the eigenvalues of $\widetilde L$ using orthogonality relations of the corresponding left
and right eigenvectors.

In this paper we suggest an alternative approach. Instead of applying a linearization,
we first perturb, project, or augment a singular matrix polynomial $P$ into a regular
matrix polynomial $\widetilde P$ such that all eigenvalues of $P$ are also eigenvalues of $\widetilde P$, \ch{which has the advantage that the polynomial structure of the problem is preserved}.
Then we compute the eigenvalues of $\widetilde P$ together with the left and right eigenvectors and extract
the finite eigenvalues of $P$ from this set. To compute the eigenvalues of the regular matrix polynomial $\widetilde P$ we may apply any of the many
numerical methods for such problems, including of course the use of linearizations and
the solution of the corresponding generalized eigenvalue problem.
In that case, one may wonder whether it is equivalent to first linearize $P$ into $L$
and then perturb (project or augment) $L$ into a regular pencil $\widetilde L$, or first
perturb (project or augment) $P$ into a regular matrix polynomial $\widetilde P$ and then
linearize $\widetilde P$ as a pencil $\widehat L$. We will show that these methods are
not equivalent and that the second approach has the advantage of leading to a
generalized eigenvalue problem of smaller size.

A different numerical method for singular quadratic eigenvalue problems (singular QEPs) has recently been proposed in \cite{DanielIvana}. In this method a singular QEP with $Q(\lambda)=\lambda^2 M + \lambda C + K$ is perturbed
into a regular problem with $\widetilde Q(\lambda)=\lambda^2
\widetilde M + \lambda \widetilde C + \widetilde K$ by small
random full rank perturbations. Next, the eigenvalues $\lambda_i$ of $\widetilde Q$ are
computed together with the right and left eigenvectors 
and if the condition number of a computed eigenvalue is small enough, then the eigenvalue is
identified as a finite eigenvalue of $Q$. The corresponding criterion is based on the
notion of the \emph{$\delta$-weak condition number}
of an eigenvalue of a singular PEP, as introduced
in \cite{LotzNoferini}.

The alternative approach proposed in this paper has the advantage that it leaves the
eigenvalues intact and is therefore expected to return more accurate solutions.
If we apply the projection variant, then another advantage are smaller matrices that
require less computational work. A drawback of the new methods is that they rely on a correct determination of the normal rank.
We give a more detailed comparison in Section~\ref{sec:num_results}.

The rest of the paper is organized as follows.
Section~\ref{sec:auxiliary} introduces the background theory on singular matrix polynomials. In
Section~\ref{sec:main} we generalize the main results from \cite{HMP_SingGep} and \cite{HMP_SingGep2} for
singular pencils to the setting of singular matrix polynomials. In
Section~\ref{sec:method} we then present the corresponding numerical methods that are
directly applied to the given matrix polynomial and compare them in
Section~\ref{sec:compare_lin} to the approach that first applies a linearization.
We illustrate the theoretical results and algorithms with numerical experiments in Section~\ref{sec:num_results}, where we also describe applications to bivariate
polynomial systems and the computation of ZGV points \ch{that are defined in Section~\ref{subs:ZGV}}. Finally,
we summarize some conclusions in Section~\ref{sec:conclusion}.

\section{Preliminaries}\label{sec:auxiliary}
For the following theory on eigenvalues, nullspaces, and minimal indices of singular matrix polynomials we refer to,
e.g., \cite{DeTeranDopicoMackey09} or \cite{DeTeranDopicoMackey_IndexSumTheorem}. First, to define
multiplicities of eigenvalues of a singular matrix polynomial, we recall the Smith form which is obtained under unimodular equivalence. We remind the reader that a
square matrix polynomial is called \emph{unimodular} if its determinant is a
nonzero constant which is equivalent to saying that it is invertible when interpreted
as a matrix over the field $\mathbb C(\lambda)$ of rational functions over $\mathbb C$.

\begin{theorem}[Smith form] \label{the:smith}
Let $P(\lambda)$ be an $m\times n$ matrix polynomial of normal rank $r$. Then there exist
unimodular matrix polynomials $E(\lambda)$ and $F(\lambda)$ of sizes $m\times m$ and $n\times n$, respectively,
such that
\begin{equation}\label{eq:SmithForm}
E(\lambda)\,P(\lambda)\,F(\lambda) = \diag\big(d_1(\lambda), \dotsc, d_r(\lambda), 0, \dotsc, 0\big) =: D(\lambda),
\end{equation}
where $d_1(\lambda), \dotsc, d_r(\lambda)$ are monic polynomials such
that
$d_j(\lambda)$ is a divisor of $d_{j+1}(\lambda)$ for $j=1, \dotsc,r-1$. The $m\times n$ diagonal matrix polynomial $D(\lambda)$ is unique.
\end{theorem}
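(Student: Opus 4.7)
The plan is to prove existence by a concrete reduction algorithm and uniqueness via the theory of determinantal divisors.

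For existence, I would induct on $\min(m,n)$ using the standard Euclidean-style reduction. Elementary row and column operations---swaps, multiplication by a nonzero scalar, and adding a polynomial multiple of one row or column to another---correspond to left or right multiplication by unimodular matrix polynomials. First I would select a nonzero entry of minimal degree, move it to position $(1,1)$, and use the division algorithm in $\CC[\lambda]$ to zero out the remaining entries in the first row and column modulo this pivot. Whenever a nonzero remainder appears, it has strictly smaller degree than the pivot, so I swap it into position $(1,1)$ and restart; since degrees are nonnegative integers, the process terminates. A standard refinement handles the case where the resulting pivot $p_{11}$ fails to divide some entry in the trailing $(m-1)\times(n-1)$ block: adding the offending row to the first row and rerunning the reduction strictly decreases the degree of the pivot. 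Once $p_{11}$ divides every entry of the trailing block, I normalize it to be monic and recurse. The induction yields the diagonal form, with exactly $r=\nrank(P)$ nonzero diagonal entries and divisibility $d_1\mid d_2\mid\cdots\mid d_r$ built in automatically by construction.

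For uniqueness, I would introduce the determinantal divisors $D_k(\lambda)$ for $k=0,1,\dots,r$, defined as the monic greatest common divisor of all $k\times k$ minors of $P(\lambda)$, with $D_0:=1$. By the Cauchy--Binet formula, every $k\times k$ minor of $E(\lambda)\,P(\lambda)\,F(\lambda)$ is a $\CC[\lambda]$-linear combination of $k\times k$ minors of $P(\lambda)$, so $D_k(P)$ divides $D_k(EPF)$; the reverse divisibility follows by applying the same argument to $E^{-1}(EPF)F^{-1}=P$, using that the inverse of a unimodular matrix polynomial is again unimodular. Hence $D_k$ is invariant under unimodular equivalence. For the diagonal form, the chain $d_1\mid d_2\mid\cdots\mid d_r$ forces every nonzero $k\times k$ minor to be a multiple of $d_1\cdots d_k$, with the leading principal $k\times k$ minor attaining this product; thus $D_k(D)=d_1\cdots d_k$. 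Therefore $d_k=D_k/D_{k-1}$ is determined by $P(\lambda)$ alone, and so is the entire matrix $D(\lambda)$.

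The hardest part, in my view, is making the termination argument in the existence half airtight: one must exhibit a single monovariant (the minimum degree of a nonzero entry in the current matrix) that strictly decreases after each restart, including the ``bad trailing entry'' subroutine, and combine this with the outer induction on matrix size. The uniqueness half is cleaner, but its clean execution depends on the Cauchy--Binet identity and on the fact that unimodularity forces invertibility over $\CC[\lambda]$, which is what permits divisibility to go in both directions.
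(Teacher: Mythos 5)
The paper does not prove this theorem; it states the Smith form as background (preceded by ``we refer to, e.g., \cite{DeTeranDopicoMackey09} or \cite{DeTeranDopicoMackey_IndexSumTheorem}''), so there is no in-paper argument to compare yours against. Your proposal is the standard and correct proof: Euclidean reduction over the principal ideal domain $\CC[\lambda]$ for existence, and invariance of the determinantal divisors $D_k$ under unimodular equivalence (via Cauchy--Binet in both directions, using that $E^{-1}$ and $F^{-1}$ are again unimodular) for uniqueness, with $d_k = D_k/D_{k-1}$ recovering the invariant polynomials. The termination monovariant you identify --- the degree of the current pivot, which strictly decreases both when a nonzero Euclidean remainder is swapped in and when a non-divisible trailing entry is folded into the first row --- is indeed what makes the existence half airtight. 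One small point worth a sentence of its own: the claim that the diagonal form has \emph{exactly} $r=\nrank(P)$ nonzero entries deserves explicit justification, namely that unimodular $E(\lambda)$ and $F(\lambda)$ are invertible over $\CC(\lambda)$ and hence preserve the rank of $P(\zeta)$ for all but finitely many $\zeta$, so they preserve the normal rank; since the normal rank of the diagonal matrix is its number of nonzero diagonal entries, that number must equal $r$. Also, in the uniqueness step, the observation that every nonzero $k\times k$ minor of $D(\lambda)$ is a product $d_{i_1}\cdots d_{i_k}$ with $i_1<\cdots<i_k$, combined with $i_j\ge j$ and the divisibility chain, is the precise reason $D_k(D)=d_1\cdots d_k$; you state the conclusion but could spell out this one-line argument for completeness.
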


The nonzero diagonal elements $d_1(\lambda), \dotsc, d_r(\lambda)$ in the Smith form \eqref{eq:SmithForm} are
called the \emph{invariant polynomials} of $P(\lambda)$.

\begin{definition}
Let $\lambda_0\in\CC$ be a finite eigenvalue of a matrix polynomial
$P(\lambda)$ of normal rank $r$. The invariant polynomials $d_1(\lambda), \dotsc, d_r(\lambda)$ of $P(\lambda)$ can then be
uniquely factored as
\[
d_i(\lambda)=(\lambda-\lambda_0)^{\alpha_i}\,p_i(\lambda),\ \textrm{where}\ \alpha_i\ge 0\
\ \textrm{and}\ \ p_i(\lambda_0)\ne 0,
\]
for $i=1, \dotsc,r$. The exponents $0\le \alpha_1\le \cdots \le \alpha_r$ are called the
\emph{structural indices} of $P(\lambda)$ at $\lambda_0$. The
\emph{algebraic multiplicity} of $\lambda_0$ is the sum $\alpha_1+\cdots+\alpha_r$ of the structural indices, while
the \emph{geometric multiplicity} is the number of nonzero structural indices.
\end{definition}

\begin{definition} \label{def:rev}
Let $P(\lambda)$ be a matrix polynomial \eqref{eq:P} of degree $d$. Then
\[
{\rm rev}\,P(\lambda):=\lambda^j\,P(1/\lambda)=A_d+\lambda A_{d-1} + \cdots +\lambda^d A_0
\]
is the \emph{reversal} of $P(\lambda)$. We say that $\lambda_0=\infty$ is an eigenvalue of
$P(\lambda)$ if $0$ is an eigenvalue of ${\rm rev}\,P(\lambda)$. The structural
indices of $P(\lambda)$ at $\lambda_0=\infty$ and the algebraic and geometric multiplicity
of eigenvalue $\lambda_0=\infty$ are then defined as structural indices, and the algebraic and geometric multiplicity of ${\rm rev}\,P(\lambda)$ at $0$,
respectively.
\end{definition}

\begin{definition} The \emph{right} and \emph{left nullspaces}
of an $m\times n$
matrix polynomial $P(\lambda)$
are the vector spaces
\begin{align*}
{\cal N}_r(P)&=\{\,x(\lambda)\in\CC(\lambda)^n: P(\lambda)\,x(\lambda)\equiv 0\,\},\cr
{\cal N}_l(P)&=\{\,y(\lambda)\in\CC(\lambda)^m: y(\lambda)^*P\,(\lambda)\equiv 0\,\}
\end{align*}
of rational vectors
$x(\lambda)$ and $y(\lambda)$ annihilated by $P(\lambda)$.
\end{definition}

Clearly, the following identity holds:
\[
\nrank(P)=n-\dim {\cal N}_r(P)=m-\dim {\cal N}_l(P).
\]
For square matrix polynomials, i.e., when $m=n$, it then follows that $\dim {\cal N}_r(P)=\dim {\cal N}_l(P)$.

A basis of a subspace ${\cal V}$ of $\CC(\lambda)^n$ is called a \emph{polynomial basis} \cite{Forney_minimal_basis}. A polynomial basis is called \emph{minimal} if
the sum of the degrees of its polynomials is minimal along all
polynomial bases of ${\cal V}$.

\begin{definition} Let $P(\lambda)$ be an $m\times n$ singular matrix polynomial of normal rank $r$. Let
$\{x_1(\lambda), \dotsc, x_{n-r}(\lambda)\}$
and
$\{y_1(\lambda), \dotsc, y_{m-r}(\lambda)\}$
be minimal bases of, respectively, the right and left nullspaces of
$P(\lambda)$, ordered so that $\eps_1\le \cdots \le \eps_{n-r}$ and $\eta_1\le \cdots \le \eta_{m-r}$, where
$\eps_i=\deg(x_i)$ for $i=1, \dotsc,n-r$ and
$\eta_i=\deg(y_i)$ for $i=1, \dotsc,m-r$.
Then $\eps_1,\dots,\eps_{n-r}$ and
$\eta_1,\dots,\eta_{m-r}$ are, respectively, the
\emph{right} and \emph{left minimal} indices of $P(\lambda)$.
\end{definition}

The minimal bases are not unique, but the minimal indices are. If
$\{x_1(\lambda), \dotsc, x_{n-r}(\lambda)\}$ and
$\{y_1(\lambda), \dotsc, y_{m-r}(\lambda)\}$ are minimal bases for the right and left nullspaces of
$P(\lambda)$, then the \emph{right} and \emph{left singular spaces at}
$\mu\in\mathbb C$ are defined as \
\begin{align*}
\ker_\mu\big(P(\lambda)\big)&:={\rm span}\big(x_1(\mu), \dotsc, x_{n-r}(\mu)\big), \\
\ker_\mu\big(P(\lambda)^*\big)&:={\rm span}\big(y_1(\mu), \dotsc, y_{m-r}(\mu)\big).
\end{align*}
The right and left singular spaces at $\mu$ do not depend on the choice of the minimal bases \cite[Lemma 2.8]{DopicoNoferini_RootPol}.

The \emph{index sum theorem} from \cite[Lemma~6.3]{DeTeranDopicoMackey_IndexSumTheorem} will play an important role
in the next section. The result can be used to compute the number of
eigenvalues of a matrix polynomial if the minimal indices are known.

\begin{theorem}[Index sum theorem]\label{thm:indexsum}
Let $P(\lambda)$ be an $m\times n$ singular matrix polynomial of degree $d$ and normal rank $r$ with
right minimal indices
$\eps_1\le \cdots \le \eps_{n-r}$ and
left minimal indices
$\eta_1\le \cdots \le \eta_{m-r}$. Then
\begin{equation}\label{eq:IndexSum}
d\,r = \delta_{\rm fin}(P)+\delta_{\infty}(P)+\sum_{i=1}^{n-r}\eps_i+\sum_{i=1}^{m-r}\eta_i,
\end{equation}
where
$\delta_{\rm fin}(P)$ is the sum of the algebraic multiplicities of all finite eigenvalues of
$P$ and $\delta_{\rm \infty}(P)$ is the algebraic multiplicity of eigenvalue
$\infty$.
\end{theorem}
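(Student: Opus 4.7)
The plan is to reduce the identity to its pencil analogue, i.e.\ the case $d=1$, for which the result is a direct corollary of the Kronecker canonical form of a singular matrix pencil: the regular part contributes to $\delta_{\rm fin}+\delta_{\infty}$, every right singular block of size $\eps\times(\eps+1)$ contributes $\eps$ to the normal rank and one right minimal index equal to $\eps$, and analogously for the left singular blocks, so that the sum of all contributions matches the normal rank exactly.

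To handle a general degree $d\ge 2$, I would pass to the first Frobenius companion linearization $L(\lambda)$ of $P(\lambda)$, a pencil of size $(m+(d-1)n)\times dn$. The $(d-1)$ blocks $-I_n$ on the subdiagonal push the normal rank up to $\nrank(L)=r+(d-1)n$, while the dimensions of its right and left nullspaces remain $n-r$ and $m-r$, matching those of $P$. Since $L$ is a \emph{strong} linearization, the finite and infinite algebraic multiplicities are preserved, i.e.\ $\delta_{\rm fin}(L)=\delta_{\rm fin}(P)$ and $\delta_{\infty}(L)=\delta_{\infty}(P)$.

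The critical ingredient is the behaviour of the minimal indices under this linearization: every right minimal index is uniformly shifted by $d-1$ and every left minimal index is unchanged, i.e.
\[
\eps_i(L)=\eps_i(P)+(d-1),\qquad \eta_j(L)=\eta_j(P).
\]
Applying the pencil index sum theorem to $L$ then gives
\[
r+(d-1)n=\delta_{\rm fin}(P)+\delta_{\infty}(P)+\sum_{i=1}^{n-r}\bigl(\eps_i(P)+d-1\bigr)+\sum_{j=1}^{m-r}\eta_j(P),
\]
and the telescoping identity $(d-1)n-(n-r)(d-1)=(d-1)r$ transforms the left-hand side into $r+(d-1)r=d\,r$, yielding the desired formula \eqref{eq:IndexSum}.

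The principal obstacle is establishing (rather than merely citing) the minimal-index shift formula; this reduces to identifying the right singular blocks of the Kronecker form of $L$ with the right singular blocks coming from $P$ augmented by $d-1$ extra columns, a non-trivial structural statement about companion linearizations of rectangular matrix polynomials. Once this, together with the fact that the Frobenius companion is a strong linearization, is granted, the remainder of the argument is the elementary rearrangement above.
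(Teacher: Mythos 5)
The paper does not actually prove this theorem: it is cited as \cite[Lemma~6.3]{DeTeranDopicoMackey_IndexSumTheorem}, so there is no ``paper's own proof'' to compare against. Judged on its own merits, your proposal is correct in outline and your arithmetic is right. The base case $d=1$ does follow from the Kronecker canonical form exactly as you describe (each block of the KCF contributes its size to the normal rank, and the blocks are in bijection with the summands on the right-hand side). For $d\ge 2$, the Frobenius companion pencil of an $m\times n$ polynomial is indeed of size $(m+(d-1)n)\times dn$, is a strong linearization, and has normal rank $r+(d-1)n$, and the substitution you perform yields $dr$ after the rearrangement.

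The one real dependency, which you correctly flag, is the minimal-index shift: $\eps_i(L)=\eps_i(P)+(d-1)$ and $\eta_j(L)=\eta_j(P)$ for the first Frobenius companion form. This is a genuine theorem and not something that can be waved away, but it is standard: the paper itself invokes it (in the square case, via $\mathbb L_1(P)$ and \cite[Thm.~5.10]{DeTeranDopicoMackey09}) in Section~\ref{sec:compare_lin}, and for rectangular polynomials the same conclusion holds for the companion pencil. It is also worth checking, as you should if you want a self-contained argument, that the proof of the shift formula does not itself appeal to the index sum theorem; it does not, since it proceeds by exhibiting explicit bijections between minimal bases of $\mathcal N_r(P)$ and $\mathcal N_r(L)$ (of the form $x(\lambda)\mapsto(\lambda^{d-1}x,\dots,\lambda x, x)$) and between minimal bases of the left nullspaces, which is elementary. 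With that ingredient in hand, your argument is complete and is in fact the standard route to the index sum theorem via linearization.
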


If $\lambda_0\in\CC$ is an eigenvalue of an $m\times n$ singular matrix polynomial $P(\lambda)$ of degree $d$ and normal rank $r$, then $\dim\big(\ker(P(\lambda_0))\big)>n-r$ and as
a representative of a right eigenvector we can take
each
$x_0\in[x]\in\ker\big(P(\lambda_0)\big)\,/\,\ker_{\lambda_0}\big(P(\lambda)\big)$; see, e.g.,
\cite{DopicoNoferini_RootPol} or \cite{LotzNoferini}. In a similar way each
$y_0\in[y]\in\ker\big((P(\lambda_0)^*)\big)\,/\,\ker_{\lambda_0}\big(P(\lambda)^*\big)$ is a representative of a left eigenvector.
If $\lambda_0$ is geometrically simple, i.e., $\dim\big(\ker(P(\lambda_0))\big)=n-r+1$, then we
can uniquely (up to a scalar) define the right eigenvector and left eigenvector by the additional
requirement that $x$ and $y$ are respectively orthogonal to $\ker_{\lambda_0}\big(P(\lambda)\big)$ and
$\ker_{\lambda_0}\big(P(\lambda)^*\big)$.

The definition of the condition number for a \ch{simple} finite eigenvalue is based on
a restriction of the expansion result
from~\cite[Thm.~2 and Eq.~(18)]{DTFM_10_1stOrderPolynomials} to
the case of a simple eigenvalue; see also \cite[Thm.~3.2]{LotzNoferini}.

\begin{theorem}\label{thm:DeTeranDopico}
Let $\lambda_0$ be a finite simple eigenvalue of an $n\times n$ matrix polynomial $P(\lambda)$ of degree $d$ that has normal rank $r$. Let
$X=[X_1\ \, x]$ be an $n\times (n-r+1)$ matrix with orthonormal columns such that the columns of $X_1$ form a basis for $\ker_{\lambda_0}\big(P(\lambda)\big)$ and the columns
of $X$ form a basis for $\ker\big(P(\lambda_0)\big)$, and let
$Y=[Y_1\ \, y]$ be an $n\times (n-r+1)$ matrix with orthonormal columns such that the columns of $Y_1$ form a basis for $\ker_{\lambda_0}\big(P(\lambda)^*\big)$
and the columns of $Y$ form a basis for ${\ker}\big(P(\lambda_0)^*\big)$.
If $E(\lambda)$ is an $n\times n$ matrix polynomial of degree $d$
such that $Y_1^*E(\lambda_0)\,X_1$ is nonsingular,
then, for sufficiently small $\eps>0$,
there exists an eigenvalue $\lambda_0(\eps)$ of the perturbed pencil
$P(\lambda)+\eps \, E(\lambda)$ such that
\begin{equation}\label{eq:ocena_det_det}
 \lambda_0(\eps)=\lambda_0 - \frac{\det(Y^*E(\lambda_0)\,X)}
{(y^*P'(\lambda_0)\,x)\cdot\det(Y_1^*E(\lambda_0)\,X_1)}\,\eps +{\cal O}(\eps^2).
\end{equation}
\end{theorem}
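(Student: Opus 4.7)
The plan is to reduce the determinantal equation $\det\bigl(P(\lambda)+\eps E(\lambda)\bigr)=0$ to the vanishing of a small $(n-r+1)\times(n-r+1)$ matrix and to expand it to first order in $\alpha:=\lambda-\lambda_0$ and $\eps$. First I would extend $X,Y$ to unitary matrices $\widetilde X=[X\ X_2]$ and $\widetilde Y=[Y\ Y_2]$ and study
\[
M(\lambda,\eps):=\widetilde Y^{*}\bigl(P(\lambda)+\eps E(\lambda)\bigr)\widetilde X.
\]
Since $P(\lambda_0)X=0$ and $Y^{*}P(\lambda_0)=0$, the top $n-r+1$ rows and the left $n-r+1$ columns of $M(\lambda_0,0)$ vanish, while its bottom-right $(r-1)\times(r-1)$ block $S$ is nonsingular (because $X$ and $Y$ exhaust $\ker P(\lambda_0)$ and $\ker P(\lambda_0)^{*}$). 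By continuity $S(\lambda,\eps)$ remains invertible near $(\lambda_0,0)$, so a Schur complement of $S$ produces an $(n-r+1)\times(n-r+1)$ matrix $A(\lambda,\eps)$ whose determinant vanishes exactly when $\det M=0$; moreover, since the off-diagonal blocks of $M$ already vanish at $(\lambda_0,0)$, their contribution to the Schur complement is absorbed into an $\mathcal O\bigl((|\alpha|+|\eps|)^2\bigr)$ correction.

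The crux is the identity
\[
Y^{*}P'(\lambda_0)\,X=\diag\bigl(0,\dots,0,\,y^{*}P'(\lambda_0)\,x\bigr).
\]
To prove that the first $n-r$ columns vanish, I would extend each column of $X_1$ to a rational vector $\tilde x(\lambda)\in{\cal N}_r(P)$ taking that value at $\lambda_0$; differentiating $P(\lambda)\tilde x(\lambda)\equiv 0$ at $\lambda_0$ gives $P'(\lambda_0)\tilde x(\lambda_0)=-P(\lambda_0)\tilde x'(\lambda_0)\in\mathrm{range}\,P(\lambda_0)$, which is orthogonal to $\ker P(\lambda_0)^{*}$ and hence to every column of $Y$. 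The mirror argument using a minimal basis of ${\cal N}_l(P)$ shows the first $n-r$ rows vanish, and only the corner entry $y^{*}P'(\lambda_0)x$ survives, which is nonzero precisely because $\lambda_0$ is a simple eigenvalue. Combining this with the Taylor expansion at $(\lambda_0,0)$ gives
\[
A(\lambda,\eps)=\alpha\diag\bigl(0,\dots,0,\,y^{*}P'(\lambda_0)x\bigr)+\eps\,Y^{*}E(\lambda_0)\,X+\mathcal O\bigl((|\alpha|+|\eps|)^2\bigr).
\]

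To finish, I would apply a second Schur complement inside $A$ using its $(n-r)\times(n-r)$ top-left block $\eps Y_1^{*}E(\lambda_0)X_1$, which is invertible for small $\eps\neq 0$ by hypothesis. A direct calculation, combined with the block-determinant identity
\[
\det\bigl(Y^{*}E(\lambda_0)X\bigr)=\det\bigl(Y_1^{*}E(\lambda_0)X_1\bigr)\cdot\Bigl(y^{*}E(\lambda_0)x-y^{*}E(\lambda_0)X_1\,(Y_1^{*}E(\lambda_0)X_1)^{-1}\,Y_1^{*}E(\lambda_0)x\Bigr),
\]
produces
\[
\det A(\lambda,\eps)=\eps^{n-r}\det\bigl(Y_1^{*}E(\lambda_0)X_1\bigr)\cdot\alpha\,y^{*}P'(\lambda_0)x+\eps^{n-r+1}\det\bigl(Y^{*}E(\lambda_0)X\bigr)+\mathcal O\bigl(\eps^{n-r}(|\alpha|+|\eps|)^2\bigr).
\]
Dividing by the nonzero prefactor $\eps^{n-r}\det(Y_1^{*}E(\lambda_0)X_1)\cdot y^{*}P'(\lambda_0)x$ and invoking the implicit function theorem yields a smooth branch $\alpha(\eps)$ with $\alpha(0)=0$; its first-order Taylor coefficient is exactly the displayed ratio, and the bound $\alpha=\mathcal O(\eps)$ turns the remainder into the desired $\mathcal O(\eps^2)$.

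The main obstacle is the orthogonality identity $Y^{*}P'(\lambda_0)X=\diag(0,\dots,0,\,y^{*}P'(\lambda_0)x)$: everything else reduces to two Schur complements and an implicit function argument, but this identity relies on the (somewhat subtle) fact that elements of the singular spaces at $\lambda_0$ extend to members of ${\cal N}_r(P)$ and ${\cal N}_l(P)$, together with the orthogonality between $\mathrm{range}\,P(\lambda_0)$ and $\ker P(\lambda_0)^{*}$. Once this is in place, the only remaining care is verifying that the neglected Taylor terms contribute at most $\mathcal O(\eps^{n-r+2})$ to $\det A$, so that they are absorbed into the $\mathcal O(\eps^2)$ of the final expansion.
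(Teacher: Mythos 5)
The paper does not give its own proof of Theorem~\ref{thm:DeTeranDopico}: it is stated as a restriction of \cite[Thm.~2 and Eq.~(18)]{DTFM_10_1stOrderPolynomials} to the simple-eigenvalue case (see also \cite[Thm.~3.2]{LotzNoferini}), so there is no in-paper argument to compare against. Your reconstruction is, however, sound in structure and captures all the essential ingredients. The orthogonality identity $Y^{*}P'(\lambda_0)X=\diag(0,\dots,0,\,y^{*}P'(\lambda_0)\,x)$ is derived correctly: each column of $X_1$ extends to a polynomial vector $\tilde x(\lambda)$ in $\mathcal N_r(P)$ (take the appropriate linear combination of the minimal basis), differentiating $P(\lambda)\tilde x(\lambda)\equiv 0$ shows $P'(\lambda_0)X_1$ lands in $\operatorname{range}P(\lambda_0)=\ker(P(\lambda_0)^{*})^{\perp}$, and the left-sided mirror argument handles the rows. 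The first Schur complement and the identification of the $(r-1)\times(r-1)$ block $S=Y_2^{*}P(\lambda_0)X_2$ as invertible are also correct, because $\operatorname{range}P(\lambda_0)=\operatorname{span}Y_2$ and $P(\lambda_0)$ is injective on $\operatorname{span}X_2$.

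The one genuine gap is the last implicit-function step. After the Schur reduction you have $A(\lambda,\eps)=\alpha D+\eps F+R(\alpha,\eps)$ with $D=\diag(0,\dots,0,c)$, $F=Y^{*}E(\lambda_0)X$, and $R$ an analytic remainder of order $(|\alpha|+|\eps|)^2$. The remainder contains pure $\alpha$-terms, since $A(\lambda,0)=Y^{*}P(\lambda)X-Y^{*}P(\lambda)X_2(Y_2^{*}P(\lambda)X_2)^{-1}Y_2^{*}P(\lambda)X$ has a nontrivial $\alpha^2$ part in general. A careful look at the Leibniz expansion of $\det A$ then shows that, writing $p=n-r+1$, the only monomials $\alpha^i\eps^j$ of total degree $p$ are $\eps^p\det F$ and $\alpha\,\eps^{p-1}c\det(Y_1^{*}E(\lambda_0)X_1)$, but there are also legitimate monomials such as $\alpha^3\eps^{p-2}$ whose $\eps$-degree is $p-2<n-r$. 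Consequently $g(\alpha,\eps):=\eps^{-(n-r)}\det A$ is \emph{not} analytic (or even continuous) at $(\alpha,\eps)=(0,0)$, and the classical implicit function theorem cannot be applied to $g$ as you state; indeed your error bound $\mathcal O(\eps^{n-r}(|\alpha|+|\eps|)^2)$ is only valid \emph{a posteriori} on the branch $\alpha=\mathcal O(\eps)$, which is exactly what you want to prove. The standard repair is the substitution $\alpha=\eps\beta$: all monomials of $\det A$ satisfy $i+j\ge p$, so $h(\beta,\eps):=\eps^{-p}\det A(\lambda_0+\eps\beta,\eps)$ is analytic near $(\beta_0,0)$ with $h(\beta,0)=\beta\,c\det(Y_1^{*}E(\lambda_0)X_1)+\det(Y^{*}E(\lambda_0)X)$, which has a simple zero at $\beta_0=-\det(Y^{*}E(\lambda_0)X)/(c\det(Y_1^{*}E(\lambda_0)X_1))$; the implicit function theorem applied to $h$ then produces an analytic $\beta(\eps)$ and hence $\alpha(\eps)=\eps\beta_0+\mathcal O(\eps^2)$. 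A Newton-polygon (Puiseux) argument isolating the slope $-1$ segment of the polygon of $\det A$ works equally well. Also note that you invoke $y^{*}P'(\lambda_0)x\neq 0$ as an automatic consequence of simplicity; this is indeed a known fact for singular polynomials (it is what makes the formula in the theorem well defined), but it is not a triviality and deserves a citation or a short argument via the Smith form.
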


In the regular case, when $n=r$, the above result simplifies into a well-known
expression for the eigenvalue expansion of a simple eigenvalue of a regular matrix polynomial \cite{Tisseur_BackwardErrorPEP}, namely
\begin{equation}\label{eq:ocena_det_det_regular}
 \lambda_0(\eps)=\lambda_0 - \frac{y^*E(\lambda_0)\,x}
 {y^*P'(\lambda_0)\,x}\,\eps +{\cal O}(\eps^2).
\end{equation}
Based on Theorem~\ref{thm:DeTeranDopico} the condition number of a simple eigenvalue can be generalized to singular matrix polynomials.

\begin{definition}\label{df:kappa}
Let $\lambda_0$ be a finite simple eigenvalue of an $n\times n$ singular matrix
polynomial $P(\lambda)$ of degree $d$ that has normal rank $r$. Let
$X=[X_1\ \, x]$ be an $n\times (n-r+1)$ matrix with orthonormal columns such that columns of $X_1$ form a basis for $\ker_{\lambda_0}\big(P(\lambda)\big)$ and columns of $X$ form a basis for
$\ker\big(P(\lambda_0)\big)$, and let
$Y=[Y_1\ \, y]$ be an $n\times (n-r+1)$ matrix with orthonormal columns such that columns of $Y_1$ form a basis for $\ker_{\lambda_0}\big(P(\lambda)^*\big)$
and columns of $Y$ form a basis for
${\ker}\big(P(\lambda_0)^*\big)$. Then we define
\begin{equation}\label{def:kp}
\gamma(\lambda_0)=|y^*P'(\lambda_0)\,x|\cdot \big(1+|\lambda_0|^2+\cdots+|\lambda_0|^{2d}\big)^{-1/2}
\end{equation}
and consider $\kappa(\lambda_0)=\gamma(\lambda_0)^{-1}$ as the \emph{condition number} of
$\lambda_0$.
\end{definition}

In the regular case $\kappa(\lambda_0)$ is indeed the condition number of
$\lambda_0$. Namely, if we perturb
$P(\lambda)=A_0+\lambda A_1+\cdots+\lambda^d A_d$ into
$\widetilde P(\lambda)=P(\lambda)+\eps E(\lambda)$, where
$E(\lambda) = E_0+\lambda E_1 +\cdots+\lambda^d E_d$ with
$\|E\|_*:=\left(\|E_0\|^2+\cdots+\|E_d\|^2\right)^{1/2} \le 1$, and
$\lambda_0(\eps)$ is the corresponding perturbed eigenvalue of $\widetilde P$, then
\[
|\lambda_0(\eps)-\lambda_0|\le \kappa(\lambda_0) \, \eps+{\cal O}(\eps^2)\,;
\]
see, e.g., \cite{Tisseur_BackwardErrorPEP} or \cite{DanielIvana}.

On the other hand, if $\lambda_0\in\CC$ is an eigenvalue of a singular matrix polynomial $P$, then arbitrarily small perturbations can move $\lambda_0$ to an arbitrary complex number as $\det(Y_1^*E(\lambda_0)X_1)$ in \eqref{eq:ocena_det_det}
can be arbitrarily close to $0$.
However, as explained by Lotz and Noferini \cite{LotzNoferini}, such behavior occurs very rarely
in practice. If we exclude a set of small measure $\delta>0$ from all possible
perturbations such that $\|E\|_*\le 1$, then there exists a constant $C\ge 1$, which
only depends on $m$, $n$, $d$, and $r$, such that
\[
|\lambda_0(\eps)-\lambda_0|\le C\,\delta^{-1} \,\kappa(\lambda_0)\,\eps +{\cal O}(\eps^2),
\]
and $\kappa(\lambda_0)$ is the quantity that describes how ill conditioned an eigenvalue is in this \emph{weak} context.
This is the reason why we will later use an approximation of
\eqref{def:kp} as a criterion whether or not a computed eigenvalue is a finite eigenvalue of
a singular matrix polynomial $P$.

The following definition of a linearization of matrix polynomials can be found, e.g., in \cite{GohLR82}, while the concept of strong linearizations has been
introduced in \cite{GohKL88}.

\begin{definition} A matrix pencil $L(\lambda)=\lambda X + Y$ with
$X,Y\in\CC^{nd\times nd}$ is a \emph{linearization} of an $n\times n$ matrix
polynomial $P(\lambda)$ of degree $d$ if there exist two unimodular
$nd\times nd$ matrix polynomials $E(\lambda)$ and $F(\lambda)$ such that
\[
E(\lambda)\,L(\lambda)\,F(\lambda)=\left[\begin{matrix} P(\lambda) & 0\cr
0 & I_{(d-1)n}\end{matrix}\right].
\]
A linearization $L(\lambda)$ is called a \emph{strong linearization} if
the reversal polynomial
${\rm rev}\,L(\lambda)$ is also a linearization of the
reversal polynomial ${\rm rev}\,P(\lambda)$.
\end{definition}

It follows immediately from the definition that a matrix polynomial and
its linearization have the same Smith form, so the finite eigenvalues and
the corresponding structural indices
of the matrix polynomial are exactly the finite eigenvalues and
corresponding structural indices of its linearization.
If the linearization is strong, then this also holds for the eigenvalue
$\infty$.

We say that a set $\mathcal A\subseteq\mathbb C^n$ is \emph{algebraic} if
it is the set of common zeros of finitely many polynomials. A set
$\Omega\subseteq\mathbb C^n$ is called \emph{generic} if its complement is
contained in an algebraic set that is not the full space $\mathbb C^n$.
\ch{The notion of a \emph{generic} set $\Omega\subseteq\mathbb C^{n\times k}$
 now follows from interpreting $\mathbb C^{n\times k}$ as $\mathbb C^{nk}$.}
We use
the expression that a property holds \emph{generically with respect to the entries
of $U\in\mathbb C^{n\times k}$} if there exists a generic set
$\Omega\subseteq\mathbb C^{n\times k}$ such that the property holds for all
$U\in\Omega$.

\section{Main results}\label{sec:main}
The main results are generalizations of similar results from \cite{HMP_SingGep} and
\cite{HMP_SingGep2}. The corresponding proofs of the results in the pencil case, however,
rely on the existence of the Kronecker canonical form for matrix pencils. Since an
analogous canonical form under strict equivalence is not available for matrix
polynomials, we need different arguments to extend the results to the polynomial case.

\begin{lemma}\label{lem:Lo} Let $P(\lambda)$ be an $n\times n$ singular matrix polynomial of
degree $d$ and normal rank $n-k$, and
let $S_R(\lambda)=\left[x_1(\lambda)\ \dots\ x_{k}(\lambda)\right]$
and $S_L(\lambda)=\left[y_1(\lambda)\ \dots\ y_{k}(\lambda)\right]$
be $n\times k$ matrix polynomials, such that the columns form minimal bases
of, respectively, the right and left nullspaces of
$P(\lambda)$
with left minimal indices $\eta_1, \dotsc,\eta_k$ and right minimal indices $\eps_1, \dotsc,\eps_k$.
Furthermore, let
$U,V\in\mathbb C^{n\times k}$ have full column rank, $N:=\eta_1+\cdots+\eta_k$,
$M:=\eps_1+\cdots+\eps_k$, and let
$\gamma_1, \dotsc,\gamma_{dk}\in\mathbb C$
be given values that are distinct from the eigenvalues of $P(\lambda)$.
Then, generically with respect to the entries of $U$ and $V^*$, the following statements hold:
\begin{enumerate}
\item[\rm 1)] The polynomial $\det\big(V^*S_R(\lambda)\big)$ has exactly $M$ simple roots
$\alpha_1, \dotsc,\alpha_M$ that are different from the eigenvalues of $P(\lambda)$
and different from $\gamma_1, \dotsc,\gamma_{dk}$. For each
$\alpha_i$ there exists a nonzero vector $z_i$ such that $P(\alpha_i)\,z_i=0$ and $V^*z_i=0$.
\item[\rm 2)] The polynomial $\det\big(U^*S_L(\lambda)\big)$ has exactly $N$ simple roots
$\beta_1, \dotsc,\beta_N$ that are different from the eigenvalues of $P(\lambda)$, from
the values $\gamma_1, \dotsc, \gamma_{dk}$, and also from $\alpha_1, \dotsc, \alpha_M$.
For each $\beta_i$ there exists a nonzero vector $w_i$ such that $w_i^*P(\beta_i)=0$ and $w_i^*U=0$.
\item[\rm 3)] For any $dk$ nonzero vectors $t_1, \dotsc,t_{dk}\in\mathbb C^k$ there exist nonzero vectors $s_1, \dotsc,s_{dk}\in\mathbb C^n$ with $P(\gamma_i)\,s_i=0$ and $t_i=V^*s_i$ for $i=1, \dotsc, dk$.
\end{enumerate}
\end{lemma}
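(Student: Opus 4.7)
The plan is to prove all three statements simultaneously by combining Forney's structural characterization of minimal polynomial bases with standard genericity arguments. The key structural fact I rely on is that for a minimal basis $\{x_1,\dots,x_k\}$ of the right nullspace, the matrix $S_R(\mu)$ has full column rank $k$ at \emph{every} $\mu\in\CC$, and in addition the matrix of leading column coefficients $L_R := [\,v_1\ \dots\ v_k\,]$, where $v_j$ is the coefficient of $\lambda^{\eps_j}$ in $x_j(\lambda)$, has full column rank $k$; the analogous statements hold for $S_L$ with a matrix $L_L$.

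For statement 1), I first determine the degree of $\det(V^*S_R(\lambda))$: it is at most $M=\eps_1+\cdots+\eps_k$ with leading coefficient equal to $\det(V^*L_R)$. As a polynomial in the entries of $V$, this leading coefficient is not identically zero because $L_R$ has full column rank, so for generic $V$ the degree is exactly $M$. Two further genericity conditions are needed: avoidance of forbidden values, and simplicity of roots. For each forbidden value $\mu_0$ (an eigenvalue of $P$ or one of the $\gamma_j$), the polynomial $\det(V^*S_R(\mu_0))$ in $V$ is not identically zero, since $S_R(\mu_0)$ has full column rank; hence ``$\mu_0$ is a root'' defines a proper algebraic hypersurface in $V$-space. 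Simplicity is controlled by the discriminant of $\det(V^*S_R(\lambda))$ as a polynomial in $\lambda$ with coefficients polynomial in $V$; I would show this discriminant is not identically zero by exhibiting a single $V$ realizing $M$ distinct roots (concretely: fix $M$ distinct points $\mu_j$ outside the forbidden set, pick a nonzero $w_j$ in the image of $S_R(\mu_j)$ for each $j$, and let $V$ be any full-rank matrix whose columns are orthogonal to all the $w_j$'s, then perturb slightly to restore the degree-$M$ condition; by continuity of roots the distinctness survives a small perturbation). Once $\alpha_i$ is a simple root, pick nonzero $c\in\ker(V^*S_R(\alpha_i))$ and set $z_i := S_R(\alpha_i)\,c$; then $z_i\ne 0$ since $S_R(\alpha_i)$ has full column rank, $V^*z_i=0$ by construction, and $P(\alpha_i)\,z_i=0$ since $P(\lambda)\,S_R(\lambda)\equiv 0$.

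Statement 2) follows by the symmetric argument applied to $U^*S_L(\lambda)$, with the additional (finitely many) generic conditions on $U$ that the produced roots $\beta_i$ avoid the already-fixed $\alpha_j$; each such condition defines a proper algebraic hypersurface in the entries of $U$. For statement 3), since $\gamma_i$ is not an eigenvalue of $P$, the kernel $\ker(P(\gamma_i))$ equals the $k$-dimensional column space of $S_R(\gamma_i)$; combined with $\gamma_i\ne\alpha_j$ (ensured by Part 1) this says $V^*S_R(\gamma_i)\in\CC^{k\times k}$ is nonsingular. The linear system $V^*S_R(\gamma_i)\,c_i = t_i$ therefore has a unique solution $c_i\ne 0$, and $s_i := S_R(\gamma_i)\,c_i$ satisfies $P(\gamma_i)\,s_i=0$ and $V^*s_i = t_i$.

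The main obstacle is exactly the one flagged by the authors in the paragraph preceding the lemma: no Kronecker-like canonical form is available for matrix polynomials of degree $d\ge 2$, so the pencil proof cannot be imported verbatim. I expect the most delicate step to be the generic simplicity of the roots of $\det(V^*S_R(\lambda))$, since it requires exhibiting one concrete $V$ realizing $M$ distinct roots using only the abstract minimal-basis properties rather than an explicit block decomposition. Once this is settled, the remaining work amounts to intersecting finitely many proper algebraic conditions on $V$ (and then on $U$), whose complement is still a nonempty Zariski-open set.
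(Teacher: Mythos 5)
Your overall structure matches the paper's: for 1) you first establish that $\det(V^*S_R(\lambda))$ generically has degree exactly $M$ via the full rank of the leading-coefficient matrix (the paper cites Forney's theorem for the same fact), you handle the avoidance of forbidden values via the observation that $V=S_R(\mu)$ makes $\det(V^*S_R(\mu))\neq 0$, and you reduce simplicity to the non-vanishing of the discriminant as a polynomial in the entries of $V^*$. Parts 2) and 3) are also essentially the paper's argument. The one delicate step, which you yourself flag, is where there is a genuine gap.

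The gap is in your concrete construction of a $V$ realizing $M$ \emph{distinct} roots. You propose: fix $M$ distinct points $\mu_1,\dots,\mu_M$ outside the forbidden set, pick nonzero $w_j\in\operatorname{im}\bigl(S_R(\mu_j)\bigr)$, and take a full-rank $V$ whose columns are orthogonal to all $w_j$. Two things can go wrong. First, the columns of $V$ must lie in $\operatorname{span}(w_1,\dots,w_M)^\perp$, which has dimension $n-\dim\operatorname{span}(w_1,\dots,w_M)$; since $M$ can be as large as $d(n-k)$ and the $w_j$ live in $k$-dimensional spaces that vary with $j$, there is no reason this complement has dimension $\ge k$, so a full-rank such $V$ need not exist. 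Second, even when it does exist, imposing all $M$ orthogonality conditions could force $\det(V^*S_R(\lambda))\equiv 0$. In that degenerate case the polynomial has no well-defined roots, and the step ``perturb slightly and invoke continuity of roots'' does not conclude: a small perturbation of the zero polynomial yields a polynomial of small norm whose roots need not be near the intended $\mu_j$ at all, so distinctness does not obviously survive. You would need to argue separately that for generic small perturbations restoring degree $M$, the roots remain simple — but that is precisely what one is trying to prove.

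The paper resolves this with a substantially different and more elaborate argument in the appendix (Lemmas A.1--A.2 and Theorem A.3). It first constructs, by induction, vectors $v_1,\dots,v_{k-1}$ for which the $(k-1)\times k$ matrix $[v_1\,\dots\,v_{k-1}]^*S_R(\lambda)$ has rank $k-1$ for \emph{every} $\lambda$; this ensures that varying only the $k$-th column of $V$ cannot make the determinant vanish identically and controls the common-root structure. It then uses a gcd-style lemma (any generic scalar linear combination $s_1 p_1+s_2 p_2$ of two coprime polynomials has simple roots) applied to two choices of the last column to synthesize a $V$ with $M$ simple roots. You should replace your ad hoc orthogonality construction by such a column-by-column argument, or at minimum quantify the dimension of $\operatorname{span}(w_1,\dots,w_M)$ and rule out the degenerate case $\det(V^*S_R(\lambda))\equiv 0$ before invoking continuity of roots.
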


\begin{proof}
1) The $k\times k$ matrix $G(\alpha):=V^*S_R(\alpha)$ has elements $g_{ij}(\alpha)=v_i^*x_j(\alpha)$ that are polynomials in $\alpha$ which, generically with respect to the entries of $v_i^*$, have degrees $\eps_j$ for $i,j=1, \dotsc,k$.
It follows that $\det G(\alpha)$ is a polynomial in $\alpha$ which generically with
respect to the entries of $V^*$ is of degree $M$ (cf. the main theorem in
\cite{Forney_minimal_basis}). Thus $\det G(\alpha)=0$ has $M$ roots
$\alpha_1, \dotsc,\alpha_M$ (counted with multiplicities).

For each root $\alpha_i$ there exists a nonzero vector $s_i\in\CC^k$ such
that $G(\alpha_i)\,s_i=0$. We know (see, e.g., \cite[Thm.~2.2]{VanDoorenDopico_Robustness}) that $S_R(\alpha)$ is of full column rank for all $\alpha$.
So, if we take $z_i=S_R(\alpha_i)\,s_i$, then $z_i$ is nonzero and $V^*z_i=V^*S_R(\alpha_i)\,s_i=G(\alpha_i)\,s_i=0$. On the other hand, since
$z_i\in\ker_{\alpha_i}\big(P(\lambda)\big)$, it also holds that $P(\alpha_i)\,z_i=0$.

For a fixed $\mu\in\mathbb C$ we can consider
$\det G(\mu)$ as a polynomial in the entries of $V^*=[v_1 \ \, \dots \ \, v_k]^*$.
For the particular choice $V=S_R(\mu)$
we obtain that $G(\mu)$ is nonsingular, which shows that $\det G(\mu)$ is a nonzero
polynomial in the entries of $V^*$. It follows that $\det G(\mu)\neq 0$
generically with respect to the entries of $V^*$, and consequently the fixed value $\mu$ will generically not be among
the roots of $\det G(\alpha)$ as a polynomial in $\alpha$. Since the intersection of finitely many generic
sets is still generic, it follows that we can exclude finitely many values from the
roots $\alpha_1, \dotsc,\alpha_M$ of $G(\alpha)$. This shows that generically with respect to the entries of $V^*$,
the values $\alpha_1, \dotsc,\alpha_M$ are different from the eigenvalues of $P(\lambda)$
and also from the given values $\gamma_1, \dotsc,\gamma_{dk}$.

It remains to show that the roots $\alpha_1, \dotsc,\alpha_M$ of $\det G(\alpha)$
are generically simple. This is the case if the discriminant of $\det G(\alpha)$,
which is a polynomial in the entries of $V^*$, is nonzero. Thus, to show
that the roots of $\det G(\alpha)$ are generically simple, it remains to show that
this discriminant is not the zero polynomial and to this end it is sufficient to find
one particular example for $V$. The existence of such a $V$ for which the roots of
$\det G(\alpha)$ are simple is guaranteed by Theorem \ref{cnj:need_to_show} in the appendix.

2) Similarly as in 1) we now consider the left null space of $P(\lambda)$ and show
the existence of $\beta_1, \dotsc,\beta_N$ and the corresponding nonzero vectors $w_1, \dotsc, w_N$, where
now the statements are generic with respect to the entries of $U$. In particular, by interpreting $V$
as already fixed, this shows that generically with respect to the entries of $U$, the values
$\beta_1, \dotsc,\beta_N$ are not only different from the eigenvalues of $P(\lambda)$ and
$\gamma_1, \dotsc,\gamma_{dk}$, but also from the values $\alpha_1, \dotsc,\alpha_M$ constructed in $1)$.

3) With the same notation as in 1) we now aim to solve the equations
$G(\gamma_i)\,c_i=t_i$ for $i=1, \dotsc, dk$. Since $\gamma_i$ is different from the
values $\alpha_1, \dotsc,\alpha_M$, we have $\det G(\gamma_i)\neq 0$ and hence $G(\gamma_i)\,c_i=t_i$
is uniquely solvable for nonzero $c_i$ for $i=1, \dotsc, dk$. The vectors that we are looking for are
$s_i=S_R(\gamma_i)\,c_i$ for $i=1, \dotsc, dk$. \qed
\end{proof}

\begin{theorem}\label{thm:main_perturbation}
Let $P(\lambda)=A_0+\lambda A_1+\cdots +\lambda^d A_d$ be an $n\times n$ singular matrix polynomial of degree $d$ and normal rank $n-k$
with right minimal indices $\eps_1\le \cdots \le \eps_{k}$ and
left minimal indices $\eta_1\le \cdots \le \eta_k$ such that
all its eigenvalues are semisimple, and let $M=\eps_1+\cdots+\eps_k$ and
$N=\eta_1+\cdots+\eta_k$. Furthermore, let $B_0,\dots,B_d\in\mathbb C^{k\times k}$ be
such that the matrix polynomial $Q(\lambda):=B_0+\lambda B_1+\cdots+\lambda^dB_d$
is regular and its eigenvalues are simple and distinct from the eigenvalues of
$P(\lambda)$. Then there exists a generic set
$\Omega\subseteq\mathbb C^{n\times k}\times\mathbb C^{k\times n}$ such that for all
$(U,V^*)\in\Omega$ and all $\tau>0$ the matrix polynomial
\begin{equation}\label{eq:pertP}
\widetilde P(\lambda) = P(\lambda) + \tau \, U\,Q(\lambda) \, V^*,\end{equation}
is regular and its $nd$ eigenvalues are independent of $\tau$ and can be classified
into the following four groups:
\begin{enumerate}
 \item[(a)] (True eigenvalues)
 If $\lambda_0$ is an eigenvalue of $P(\lambda)$, then $\lambda_0$ is also
 an eigenvalue of $\widetilde P(\lambda)$. If $x$ and $y$ are corresponding right and left eigenvectors then $V^*x=0$ and $U^*y=0$. There are $(n-k)d-M-N$ such eigenvalues.
 \item[(b)] (Right random eigenvalues) There are $M$ simple eigenvalues, which are all different from the eigenvalues in (a) and those of $Q(\lambda)$, such that $V^*x = 0$ and $U^*y \ne 0$, where $x$ and $y$ are corresponding right and left eigenvectors.
 \item[(c)] (Left random eigenvalues) There are $N$ simple eigenvalues, which are all different from the eigenvalues in (a) and (b) and those of $Q(\lambda)$, such that $V^*x \ne 0$ and $U^*y = 0$, where $x$ and $y$ are corresponding right and left eigenvectors.
 \item[(d)] (Prescribed eigenvalues) If $\lambda_0$ is an eigenvalue of $Q(\lambda)$, \ch{then $\lambda_0$ is also
 an eigenvalue of $\widetilde P(\lambda)$. If} $x$ and $y$ are corresponding right and left eigenvectors, then $V^*x\ne 0$ and $U^*y \ne 0$. There are $kd$ such eigenvalues.
\end{enumerate}
\end{theorem}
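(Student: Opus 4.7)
The plan is to establish, in order, that $\widetilde P(\lambda)$ is regular for generic $(U,V^*)$, that each of the four described families is contained in the spectrum of $\widetilde P$ for every $\tau>0$, that these families already account for all $nd$ eigenvalues of $\widetilde P$, and that the conditions $V^*x=0$ versus $\neq 0$, and likewise $U^*y=0$ versus $\neq 0$, distinguish them. Regularity I would prove by fixing some $\mu\in\CC$ that is neither an eigenvalue of $P$ nor of $Q$ and showing $\widetilde P(\mu)$ is invertible for generic $U,V^*$: from $\widetilde P(\mu)x=0$ we have $P(\mu)x=-\tau U Q(\mu)V^*x$, and splitting on whether $V^*x=0$ yields either $x\in\ker P(\mu)\cap\ker V^*$ (trivial generically since $\dim\ker P(\mu)=k$ and $\dim\ker V^*=n-k$) or $P(\mu)x\in\operatorname{range}(P(\mu))\cap\operatorname{range}(U)$ (also trivial generically), which combined with the invertibility of $Q(\mu)$ forces $V^*x=0$ after all.

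Next I would populate the four families. For (b), part~1 of Lemma~\ref{lem:Lo} supplies $M$ simple roots $\alpha_i$ of $\det(V^*S_R(\lambda))$ and vectors $z_i$ with $P(\alpha_i)z_i=0$ and $V^*z_i=0$, so $\widetilde P(\alpha_i)z_i=0$ for every $\tau$; (c) is handled symmetrically by part~2. For (d), applying part~3 of Lemma~\ref{lem:Lo} with $\gamma_1,\dots,\gamma_{dk}$ chosen as the eigenvalues of $Q$ and $t_i$ a right null vector of $Q(\gamma_i)$ produces $s_i\in\CC^n$ with $P(\gamma_i)s_i=0$ and $V^*s_i=t_i\neq 0$, so $\widetilde P(\gamma_i)s_i=\tau UQ(\gamma_i)t_i=0$. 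For (a), if $\lambda_0$ is an eigenvalue of $P$ of algebraic (hence, by semisimplicity, geometric) multiplicity $g$, then $\dim\ker P(\lambda_0)=k+g$, so generically $\dim\bigl(\ker P(\lambda_0)\cap\ker V^*\bigr)=g$, producing $g$ independent vectors in $\ker\widetilde P(\lambda_0)$ and symmetrically $g$ left eigenvectors annihilated by $U^*$. The eigenvalue $\infty$ is handled identically by passing to the reversal polynomials of $P$, $Q$, and $\widetilde P$.

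A global counting step then closes the argument. The contributions produced so far to the total algebraic multiplicity of eigenvalues of $\widetilde P$ are at least $g$ from each true eigenvalue, at least $1$ from each random eigenvalue, and at least $1$ from each eigenvalue of $Q$; these lower bounds sum to
\[
\bigl((n-k)d-M-N\bigr)+M+N+kd=nd
\]
by the index-sum theorem (Theorem~\ref{thm:indexsum}) applied to $P$. Since $\widetilde P$ is an $n\times n$ regular polynomial of degree $d$, its eigenvalues counted with algebraic multiplicity total exactly $nd$, so every lower bound is sharp: the true eigenvalues retain algebraic multiplicity $g$ and remain semisimple in $\widetilde P$ (hence every right eigenvector is annihilated by $V^*$ and every left eigenvector by $U^*$), while random and prescribed eigenvalues are geometrically simple. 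Distinctness of the four families, and therefore $\tau$-independence of the spectrum, follows from the disjointness already built into Lemma~\ref{lem:Lo} together with the hypothesis that the spectra of $P$ and $Q$ are disjoint. The condition $U^*y\neq 0$ in (b) holds because $U^*y=0$ would place $\alpha_i$ among the roots of $\det(U^*S_L(\lambda))$, contradicting Lemma~\ref{lem:Lo}(2); case (c) is symmetric, and in (d) the relation $V^*s=t\neq 0$ is built into the construction.

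I expect the main obstacle to be the interplay between regularity of $\widetilde P$ and the algebraic-multiplicity accounting: regularity is needed to invoke the total of $nd$ eigenvalues, yet the cleanest way to force the inequalities of the previous paragraph to become equalities is precisely that total. The generic complementarity at a single non-eigenvalue point $\mu$, itself a direct consequence of $\nrank(P)=n-k$, is what breaks this interdependence. Because no analogue of the Kronecker canonical form is available for matrix polynomials, one cannot reduce $P$ to a block normal form and must instead argue generically throughout, which is why parts~1)--3) of Lemma~\ref{lem:Lo} are set up in precisely the form needed to drive every step above.
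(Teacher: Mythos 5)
Your proof is correct. It shares the paper's skeleton (regularity at a fixed non-eigenvalue $\mu$, identification of the four families via Lemma~\ref{lem:Lo}, and a closing index-sum count), but it takes a genuinely different route to the simplicity of the random eigenvalues and to the conditions $U^*y\ne 0$ in (b) and $V^*x\ne 0$ in (c). The paper analyzes, for each random right eigenvalue $\alpha_0$, the auxiliary singular pencil $G+\tau H:=P(\alpha_0)+\tau\,UQ(\alpha_0)V^*$ in the variable $\tau$, and invokes the index sum theorem \emph{for that pencil} to show it has exactly one right minimal index $0$, one left minimal index $1$, and no eigenvalues other than $0$ and $\infty$; this yields geometric simplicity of $\alpha_0$ and an explicit $\tau$-linear left eigenvector $y(\tau)=w+\tau z$ with $U^*y(\tau)\ne 0$, all before the global count. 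You instead obtain (algebraic) simplicity directly from the count --- disjointness of the families from Lemma~\ref{lem:Lo} plus the identity $\bigl((n-k)d-M-N\bigr)+M+N+kd=nd$ forces every lower bound to be tight --- and then derive $U^*y\ne 0$ a posteriori: if $U^*y=0$ then $y^*P(\alpha_i)=0$, so $y$ lies in the range of $S_L(\alpha_i)$ and $\alpha_i$ would be one of the $\beta_j$, contradicting Lemma~\ref{lem:Lo}(2). This shortcut is sound because the counting step uses only the multiplicity lower bounds obtained from exhibiting eigenvectors, together with disjointness, and neither of these needs the $U^*y$/$V^*x$ information in advance; the same reasoning forces $\ker\widetilde P(\lambda_0)=\ker P(\lambda_0)\cap\ker V^*$ at each true eigenvalue, so the claim in (a) holds for \emph{every} eigenvector, not just the ones you construct. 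The paper's longer route buys local, $\tau$-explicit information about the Kronecker structure at the random eigenvalues, whereas yours is more economical and avoids a second, separate application of the index sum theorem.
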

\begin{remark}\rm
We use the attribute ``right'' or ``left'' in (a) and (b) to indicate that the
corresponding random eigenvalues can be related to the right or left minimal indices
of the polynomial $P(\lambda)$, respectively.
\end{remark}

\begin{proof}
First, we note that $U$ and $V$ generically have full column rank, so in the
following we assume that this is the case.
Suppose that $\mu_0$ is neither an eigenvalue of $P(\lambda)$ nor $Q(\lambda)$,
which means that $\rank\big(P(\mu_0)\big)=n-k$. Then we have
$\det\big(\widetilde P(\mu_0)\big)\neq 0$ for all $\tau>0$ if and only if
the range of $UQ(\mu_0)V^*$ has a trivial intersection with the kernel of $P(\mu)$.
By the dimension formula, it is easy to construct matrices $U,V\in\mathbb C^{n\times k}$
such that this is the case. Since $\det\big(\widetilde P(\mu_0)\big)$ is a polynomial in the entries of $U$ and $V^*$, this polynomial is not identical to zero and it follows that
generically $\nrank(\widetilde P)=n$ independent of $\tau$.
In such case $\widetilde P(\lambda)$ has $nd$ eigenvalues (some of them can be infinite if $A_d+\tau \, UB_dV^*$ is singular).

(a) Let $\lambda_0$ be a finite eigenvalue of $P(\lambda)$ of multiplicity $r\ge 1$. Then
$\dim\big(\ker(P(\lambda_0))\big)=k+r$ and there exist $r$ linearly independent vectors
$x_1, \dotsc, x_r\in \ker(P(\lambda_0))$ such that $V^*x_i=0$ for $i=1, \dotsc,r$. It
follows that $\widetilde P(\lambda_0)\,x_i=0$ for $i=1, \dotsc,r$ and hence $\lambda_0$
is an eigenvalue of $\widetilde P(\lambda)$ of multiplicity at least $r$ independent
of $\tau>0$. In a similar way we see that there exist
$r$ linearly independent vectors $y_1, \dotsc, y_r\in\ker(P(\lambda_0)^*)$ such that
$U^*y_i=0$ and $y_i^*P(\lambda_0)=y_i^*\widetilde P(\lambda_0)=0$.

To show that (a) holds for $\lambda_0=\infty$ as well, we apply the already proved part
to the reversal ${\rm rev}\,P(\lambda)$.

(b) In the following let $\gamma_1,\dots,\gamma_{dk}$ denote the eigenvalues of
$Q(\lambda)$. Let $S_R(\lambda)$ be an $n\times k$ matrix polynomial such that its
columns form a minimal basis of the right nullspace ${\cal N}_r$
of $P(\lambda)$. By Lemma~\ref{lem:Lo}, $\det(V^*S_R(\alpha))$ generically is a
polynomial of degree $M$ with simple roots $\alpha_1, \dotsc,\alpha_M$ which are all
different from the eigenvalues of $P(\lambda)$ and from $\gamma_1,\dots,\gamma_{dk}$.
If $\alpha_0$ is such \ch{a root}, then by Lemma~\ref{lem:Lo} there exists a nonzero vector
$z$ such that $P(\alpha_0)\,z=0$ and $V^*z=0$. It follows that
$\widetilde P(\alpha_0)\,z=0$, thus $\alpha_0$ is an eigenvalue of
$\widetilde P$ and $z$ is a right eigenvector, independent of $\tau>0$.

Considering now $\tau$ as a variable, it follows that the pencil
\begin{equation}\label{eq:pengh}
G+\tau H:=P(\alpha_0) +\tau \, U\,Q(\alpha_0)\,V^*
\end{equation}
is singular. Clearly, we have $\rank(G)=n-k$ and $\rank(H)=k$, because
$\alpha_0$ differs from all eigenvalues of $P(\lambda)$ and $Q(\lambda)$ and
because $U$ and $V$ have full column rank.

Suppose that \ch{${\rm nrank}(G-\lambda H)=n-j$} for $j\ge 1$, which means that \eqref{eq:pengh} has $j$ right and $j$ left minimal indices.
We know from $Gz=0$ and $Hz=0$ that at least one
right minimal index is equal to zero. The remaining $j-1$ right minimal indices
are all larger than zero, because otherwise there would exist a nonzero $y\in\ker(G)\cap\ker(H)$ linearly independent of $z$. But then $Gy=0$ implies that
$y$ is in the range of $S_R(\alpha_0)$, i.e., there exists
$\widetilde y\in\mathbb C^k$ such that $y=S_R(\alpha_0)\widetilde y$.
Thus, since $Hy=0$, we have that $V^*S_R(\alpha_0)\widetilde y=0$,
because $U$ has full column rank and $Q(\alpha_0)$ is
nonsingular. But this implies $\alpha_0$ would be
a multiple root of $\det V^*S_R(\alpha)$, which is not possible due to Lemma \ref{lem:Lo}.

Now suppose that \eqref{eq:pengh} has a left minimal index zero.
Then there exists a nonzero vector $w$ such that $w^*G=0$ and $w^*H=0$, which implies $w^*U=0$
because $V$ has full rank and $Q(\alpha_0)$ is nonsingular. But then $\alpha_0$
is equal to one of the values $\beta_1, \dotsc,\beta_N$ from Lemma~\ref{lem:Lo}
which is a contradiction to Lemma~\ref{lem:Lo}. Thus, all left minimal
indices of~\eqref{eq:pengh} are larger than or equal to one.

Now $\rank(H)=k$ implies that $\tau_0=\infty$ is an eigenvalue of the pencil
\eqref{eq:pengh} of geometric multiplicity $n-k-j$ and thus algebraic multiplicity
at least $n-k-j$. Similarly, $\tau_0=0$ is an eigenvalue of algebraic multiplicity
at least $k-j$. Therefore, the sum of the multiplicities of the finite and infinite eigenvalues of \eqref{eq:pengh}
is at least $n-2j$. On the other hand, the normal rank of \eqref{eq:pengh} is $n-j$ and the pencil has $j$ left
minimal indices that are all greater or equal to one. It follows from \eqref{eq:IndexSum} of the index sum theorem that
the only possible option is that $j=1$. Therefore, \eqref{eq:pengh} has one left minimal index one, one right minimal index zero, and $0$ and $\infty$ are the only eigenvalues of \eqref{eq:pengh}.

It follows that there exist linearly independent vectors
$w$ and $z$ such that
\begin{equation}\label{eq:left_eigs_alpha}
w^*G=0,\quad z^*G=w^*H\neq 0,\quad z^*H=0,
\end{equation}
which implies $U^*w\ne 0$. Up to scaling, the left eigenvector $y$ of \eqref{eq:pertP}
associated with $\alpha_0$ then has the form
$y(\tau)=w+\tau z$, is a linear function of $\tau$, and satisfies $U^*y(\tau)\ne 0$.

(c) We show this in a similar way as (b).

(d) Let $\gamma_i$ be an eigenvalue of $Q(\lambda)$ with the right eigenvector $t_i\in\CC^k$.
It follows from Lemma \ref{lem:Lo} that there exists a nonzero vector
$x_i$ such that $P(\gamma_i)\,x_i=0$ and $t_i=V^*x_i$, therefore
$\widetilde P(\gamma_i)\,x_i=0$ and $V^*x_i\ne 0$ for $i=1, \dotsc, dk$
independent of $\tau>0$. In a similar way we can find a left eigenvector $y_i$ such
that $y_i^*P(\gamma_i)=0$ and $U^*y_i\ne 0$, again independent of $\tau>0$.
Noting that due to the simplicity of the eigenvalues of $\gamma_i$ left and right
eigenvectors are unique up to multiplication with nonzero scalars concludes the
proof of (d).

\ch{It follows from Theorem~\ref{thm:indexsum} (index sum theorem) that
the sum of the algebraic multiplicities of the eigenvalues of $P(\lambda)$ is $(n-k)d-M-N$.}
By counting the number of eigenvalues of $\widetilde P(\lambda)$, we can see from
the \ch{index sum theorem} that there are no other eigenvalues
than those from (a), (b), (c), and (d)\ch{. In addition,} the algebraic
multiplicity of each of the eigenvalues in (a) are equal for the \ch{polynomials}
$P(\lambda)$ and $\widetilde P(\lambda)$. \qed
\end{proof}

We highlight that the right and left random eigenvalues from Theorem \ref{thm:main_perturbation} depend only on the choices of $U$ and $V$, respectively.
In particular, they do not depend on the choices of the matrix polynomial $Q(\lambda)$
and of $\tau>0$.

\begin{remark}\rm
The corresponding result in the pencil case \cite{HMP_SingGep} has no restrictions on
the multiplicity of the eigenvalues of the singular pencil, because the corresponding
proof is based on the Kronecker canonical form of matrix pencils which allows the
separation of the regular part from the singular part of the matrix pencil. For matrix polynomials there unfortunately is no straightforward extension, and the alternative proof strategy in Theorem~\ref{thm:main_perturbation}
needs the additional assumption that the eigenvalues of the singular pencil are
semisimple. However, numerical experiments suggest that the result also holds for singular
matrix polynomials that have eigenvalues that are not semisimple.
\end{remark}

Let us consider a given singular matrix polynomial $P(\lambda)$ and
matrices $U$, $V$, $B_0, \dotsc,B_k$ as in Theorem~\ref{thm:main_perturbation}. If
$U_\perp$ and $V_\perp$ are matrices whose columns form bases for the orthogonal
complement of the ranges of $U$ and $V$, respectively, then we can build
a strictly equivalent matrix polynomial
\begin{equation}\label{coord}
\widehat P(\lambda):= [U \ \, U_{\perp}]^*\,P(\lambda)\,[V \ \, V_{\perp}]
=
\mtxa{cc}{U^*P(\lambda)\,V & U^*P(\lambda)\,V_{\perp} \\[1mm]
U_{\perp}^*P(\lambda)\,V & U_{\perp}^*P(\lambda)\,V_{\perp}}.
\end{equation}
\ch{If we apply the same equivalence transformation to the perturbed matrix polynomial \eqref{eq:pertP},
we see that $[U \ \, U_{\perp}]^*\,\widetilde P(\lambda)\,[V \ \, V_{\perp}]$
only differs from $\widehat P(\lambda)$
by an extra term $\tau \, Q(\lambda)$ in the $(1,1)$-block.}
The following generalization of
\cite[Prop.~4.1]{HMP_SingGep2} shows that under mild additional assumptions
it is possible to extract the true eigenvalues of the
matrix polynomial $P(\lambda)$ from
the projected matrix polynomial $U_{\perp}^*P(\lambda)\,V_{\perp}$
of size $(n-k)\times (n-k)$.

\begin{proposition}\label{prop_proj}
Let $P(\lambda)$ be a complex $n\times n$ matrix polynomial
of degree $d$ and normal rank $n-k$
such that all its eigenvalues are semisimple. Furthermore, let $U$, $V$, $B_0, \dotsc,B_k$ satisfy the hypotheses of Theorem~\ref{thm:main_perturbation}. If the
$(n-k)\times (n-k)$ matrix polynomial
$P_{22}(\lambda):=U_{\perp}^*P(\lambda)\,V_{\perp}$ from \eqref{coord} is regular, then
its eigenvalues are precisely:
\begin{enumerate}
\item[a)] the random eigenvalues of \eqref{eq:pertP} with the same $U$ and $V$;
\item[b)] the true eigenvalues of $P(\lambda)$.
\end{enumerate}
\end{proposition}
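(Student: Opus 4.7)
My plan is to exploit the block decomposition \eqref{coord}. Since $[U\ U_\perp]$ and $[V\ V_\perp]$ may be chosen unitary, $P$ and $\widehat P$ have the same Smith form, and $\widetilde P$ from \eqref{eq:pertP} is equivalent to $\widehat P$ with the $(1,1)$-block replaced by $P_{11}+\tau Q$. The strategy is to prove that every eigenvalue from groups (a), (b), (c) of Theorem~\ref{thm:main_perturbation} is an eigenvalue of the regular polynomial $P_{22}$ with at least its full algebraic multiplicity, and then to close the argument by an index sum count.

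First I would handle the true eigenvalues. For $\lambda_0$ of algebraic multiplicity $r$, which equals its geometric multiplicity by the semisimplicity hypothesis, $\ker\big(P(\lambda_0)\big)$ has dimension $r+k$. Generically the restriction of $V^*$ to the $k$-dimensional subspace $\ker_{\lambda_0}\big(P(\lambda)\big)$ is a bijection onto $\CC^k$ (this is already encoded in Lemma~\ref{lem:Lo} through the nonvanishing of $\det(V^*S_R(\lambda_0))$), so that $V^*$ restricted to the whole $\ker\big(P(\lambda_0)\big)$ is surjective with an $r$-dimensional kernel. This kernel sits in the range of $V_\perp$ and produces $r$ linearly independent $\hat x_i\in\CC^{n-k}$ with $V_\perp\hat x_i\in\ker\big(P(\lambda_0)\big)$; multiplying on the left by $U_\perp^*$ gives $P_{22}(\lambda_0)\,\hat x_i=0$. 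Since $P_{22}$ is regular, its algebraic multiplicity at $\lambda_0$ is at least its geometric one, hence at least $r$. A right random eigenvalue $\alpha$ has a right eigenvector $x$ of $\widetilde P$ with $V^*x=0$, so $x=V_\perp\hat x$; together with $\widetilde P(\alpha)\,x=P(\alpha)\,x=0$ this yields $P_{22}(\alpha)\,\hat x=0$. Left random eigenvalues are handled symmetrically through the left eigenvector $y=U_\perp\hat y$, which produces $\hat y^*P_{22}(\beta)=0$. Infinite true eigenvalues are then covered by applying exactly the same argument to ${\rm rev}\,P$ and ${\rm rev}\,P_{22}$.

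To finish, I count. The algebraic multiplicities produced in this way total at least $\big((n-k)d-M-N\big)+M+N=(n-k)d$, where the first summand comes from the index sum theorem applied to $P$. On the other hand, since $P_{22}$ is a regular $(n-k)\times(n-k)$ matrix polynomial of degree at most $d$, the index sum theorem applied to $P_{22}$ bounds the sum of the algebraic multiplicities of all its eigenvalues (finite and infinite) by $(n-k)d$. Equality must therefore hold throughout, which forces $P_{22}$ to have degree exactly $d$, no further eigenvalues, and multiplicities that coincide exactly with those of (a), (b), (c).

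The main obstacle I anticipate is making the multiplicity bookkeeping tight. Semisimplicity is essential for the kernel-counting step to capture the full algebraic multiplicity of the true eigenvalues, and one has to verify that the genericity already built into Theorem~\ref{thm:main_perturbation} and Lemma~\ref{lem:Lo} suffices to ensure both the bijectivity of $V^*$ on $\ker_{\lambda_0}\big(P(\lambda)\big)$ for every eigenvalue $\lambda_0$ and, via the reversal, the analogous property at $\infty$, so that no new generic restrictions on $U$ or $V$ need to be introduced.
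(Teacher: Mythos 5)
Your proof is correct and follows essentially the same route as the paper: show each true and random eigenvalue of $\widetilde P$ has a right (or left) eigenvector of the form $V_\perp s$ (or $U_\perp t$), so that it is also an eigenvalue of $P_{22}$, and then close by a counting argument against the $d(n-k)$ total eigenvalues of $P_{22}$. Your write-up fills in more detail than the paper (the explicit dimension count for true eigenvalues via Lemma~\ref{lem:Lo}, the reversal for $\infty$, the index sum bookkeeping), but the underlying argument is the same.
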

\begin{proof}
We know that the true and random eigenvalues of
$\widetilde P(\lambda)$ are independent of $\tau$ and $B_0, \dotsc,B_k$.
Let $\lambda_0$ be such an eigenvalue. Then it follows from
Theorem~\ref{thm:main_perturbation} that a right eigenvector has the form $V_{\perp} s$
for a nonzero $s\in \CC^{n-k}$ or a left eigenvector is of the form
$U_{\perp} t$ for a nonzero $t\in \CC^{n-k}$ (both statements are true for a true
eigenvalue and exactly one of the statements is true for a random eigenvalue).
It follows that $P_{22}(\lambda_0)\,s=0$ or
$t^*P_{22}(\lambda_0)=0$. Consequently,
$\lambda_0$ is an eigenvalue of $P_{22}(\lambda)$.
Since \eqref{eq:pertP} altogether has $d(n-k)$ eigenvalues from the groups (a), (b), and
(c) from Theorem~\ref{thm:main_perturbation}, it follows by
a simple counting argument that these are all eigenvalues of $P_{22}(\lambda)$.\qed
\end{proof}

\begin{remark}\rm
If we use the slightly weaker concept of genericity with respect to the real and
imaginary parts of the entries of $U$ and $V$ in Proposition~\ref{prop_proj}, then
the regularity of the pencil $U_{\perp}^*P(\lambda)\,V_{\perp}$ is generically
guaranteed by the results from \cite{HMP_SingGep2}: since the normal rank of
$P(\lambda)$ is $n-k$, it follows that the $(n-k)\times (n-k)$
matrix polynomial $\widetilde U^*P(\lambda)\widetilde V$ is regular, generically with
respect to the real and imaginary parts of the entries of $n\times (n-k)$ matrices
$\widetilde U$ and $\widetilde V$. Then using \cite[Prop.~9.3]{HMP_SingGep2} it follows
that $U_{\perp}^*P(\lambda)\,V_{\perp}$ is regular also generically with respect to the
real and imaginary parts of the entries of $U,V\in\CC^{n\times k}$.
\end{remark}

Another option that can be used to compute the true eigenvalues, is to augment a
singular matrix polynomial in such way that it becomes regular while the true eigenvalues remain unchanged. A generalization of the augmented method from \cite{HMP_SingGep2} is to use the $(n+k) \times (n+k)$
augmented matrix polynomial
\begin{equation} \label{augm1x}
P_a(\lambda) :=
\left[ \begin{array}{cc} P(\lambda) & U Q_1(\lambda) \\ Q_2(\lambda)V^* & 0 \end{array} \right],
\end{equation}
where $Q_i(\lambda) = B_0^{(i)}+\lambda B_1^{(i)} +\cdots +\lambda^d B_d^{(i)}$ for $i=1,2$ is a regular $k\times k$ matrix polynomial of degree $d$, and $U,V\in\CC^{n\times k}$. The following result is a generalization of
\cite[Prop. 5.1]{HMP_SingGep2}.

\begin{proposition}\label{prop_augm}
Let $P(\lambda)$ be a complex $n\times n$ matrix polynomial of normal
rank $n-k$ such that all its eigenvalues are semisimple. Assume that the regular $k\times k$ matrix polynomials $Q_1(\lambda)$ and $Q_2(\lambda)$ of degree $d$ are chosen in such a way that their $2dk$ eigenvalues are pairwise distinct.
Furthermore, let $U,V\in \CC^{n\times k}$ have orthonormal columns such that the
augmented matrix polynomial \eqref{augm1x} is regular and $(U,V^*)\in\Omega$ holds,
where $\Omega$ is the generic set from Theorem~\ref{thm:main_perturbation}. Then the matrix polynomial \eqref{augm1x} has the following eigenvalues:
\begin{enumerate}
\item[a)] $2dk$ prescribed eigenvalues, which are precisely the
eigenvalues of $Q_1(\lambda)$ and $Q_2(\lambda)$;
\item[b)] the random eigenvalues of \eqref{eq:pertP} with the same $U$ and $V$;
\item[c)] the true eigenvalues of $P(\lambda)$.
\end{enumerate}
\end{proposition}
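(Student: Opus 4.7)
The plan is to mirror the strategy of Theorem~\ref{thm:main_perturbation}: for each candidate eigenvalue in groups (a)--(c) of the statement, I would exhibit an explicit right or left eigenvector of $P_a(\lambda)$, then close the argument with a counting step based on the index sum theorem.

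The prescribed eigenvalues come for free from the block structure of $P_a$. If $Q_1(\gamma)\,t=0$ with $t\in\CC^k$ nonzero, then the stacked vector $[0;\,t]\in\CC^{n+k}$ (first block $0\in\CC^n$, second block $t$) lies in the right kernel of $P_a(\gamma)$, since $P_a(\gamma)\,[0;\,t]=[UQ_1(\gamma)t;\,0]=0$. Dually, if $s^*Q_2(\gamma)=0$ with $s\neq 0$, then $[0;\,s]$ lies in the left kernel. The assumption that the $2dk$ eigenvalues of $Q_1$ and $Q_2$ are pairwise distinct then delivers $2dk$ simple eigenvalues of $P_a$. For (c), if $\lambda_0$ is a true eigenvalue of $P$ of algebraic multiplicity $r$, the semisimplicity hypothesis together with Theorem~\ref{thm:main_perturbation}(a) supplies $r$ linearly independent vectors $x_1,\ldots,x_r\in\ker P(\lambda_0)$ with $V^*x_i=0$; each $[x_i;\,0]$ then lies in the right kernel of $P_a(\lambda_0)$, so $\lambda_0$ appears as an eigenvalue of $P_a$ with geometric multiplicity at least $r$. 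The same embedding handles (b): the right random eigenvector $z$ at $\alpha$ constructed in the proof of Theorem~\ref{thm:main_perturbation}(b) satisfies both $P(\alpha)z=0$ and $V^*z=0$, so $[z;\,0]$ is in the right kernel of $P_a(\alpha)$, and symmetrically $[w;\,0]$ is a left kernel vector at each left random eigenvalue $\beta$ via $U^*w=0$.

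To close, I would verify that the three groups are pairwise disjoint: applying Theorem~\ref{thm:main_perturbation} in turn with $Q=Q_1$ and $Q=Q_2$ separates the prescribed eigenvalues from the true and random ones, and parts (a)--(c) of the same theorem separate true from random. Because $P_a$ is assumed regular of size $n+k$, the index sum theorem (Theorem~\ref{thm:indexsum}) fixes the total algebraic multiplicity of its finite and infinite eigenvalues at $d(n+k)$. The lower bounds from the three groups add to $2dk+(M+N)+\bigl((n-k)d-M-N\bigr)=d(n+k)$, where the contribution of group (c) uses the index sum theorem applied to $P$ itself. Since lower and upper totals already agree, equality is forced throughout and there are no further eigenvalues.

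The main subtlety is that the explicit eigenvector construction supplies only geometric (not algebraic) multiplicities for the true eigenvalues of $P$ inside $P_a$. The sharp counting argument is precisely what closes this gap: since groups (a) and (b) account for exactly $2dk$ and $M+N$ simple eigenvalues respectively, every remaining unit of algebraic multiplicity in $P_a$ must be absorbed by group (c), forcing each true eigenvalue of $P$ to retain its original algebraic multiplicity in the augmented polynomial.
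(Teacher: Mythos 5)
Your proof is correct and follows essentially the same strategy as the paper: exhibit explicit right or left eigenvectors of $P_a$ embedded from the structures supplied by Theorem~\ref{thm:main_perturbation}, then close with the counting argument. You spell out the accounting more explicitly than the paper does (in particular noting that the explicit eigenvectors only give geometric lower bounds, and that the index sum theorem applied to both $P$ and $P_a$ makes the totals match), but the underlying argument is the same.
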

\begin{proof}
For part a), if $\mu$ is an eigenvalue of $Q_1(\lambda)$ with
an eigenvector $w \in \CC^k$, then
\[
P_a(\mu)\left[ \begin{array}{cc} 0 \\ w \end{array} \right]=0
\]
and $\mu$ is an eigenvalue of \eqref{augm1x}. In a similar way,
if $\mu$ is an eigenvalue of $Q_2(\lambda)$ with
a left eigenvector $z \in \CC^k$, then
\[
[\, 0 \ \ z^*\,] \ P_a(\mu)=0
\]
and $\mu$ is an eigenvalue of \eqref{augm1x}. As the eigenvalues of $Q_1(\lambda)$ and
$Q_2(\lambda)$ are pairwise distinct, this gives $2dk$ eigenvalues of
$P_a(\lambda)$.

For b) and c) we consider the perturbed pencil \eqref{eq:pertP} with the same $U$ and $V$ and an arbitrary $Q(\lambda)$ satisfying the hypothesis of Theorem~\ref{thm:main_perturbation} exploiting that the
random eigenvalues of \eqref{eq:pertP} are independent of $\tau$ and $Q(\lambda)$.
Let $U_\perp$ and $V_\perp$ be matrices whose columns form bases for the orthogonal
complement of the ranges of $U$ and $V$, respectively.
If $\mu$ is a random eigenvalue of \eqref{eq:pertP} then it follows from Theorem \ref{thm:main_perturbation} that either
the corresponding right eigenvector has the form $V_{\perp} s$ for
a nonzero $s\in \CC^{n-k}$ or the corresponding left eigenvector
has the form $U_{\perp} t$ for a nonzero $t\in \CC^{n-k}$. Then either
\[
P_a(\mu)\left[ \begin{array}{cc} V_{\perp} s \\ 0 \end{array} \right]=0\quad{\rm or}\quad
[\, t^*U_{\perp}^* \ \ 0 \,] \ P_a(\mu)=0
\]
and $\mu$ is thus an eigenvalue of $P_a(\lambda)$.

For c), similarly as in b), we know from Theorem \ref{thm:main_perturbation} that $\mu$ is a true eigenvalue of
$P(\lambda)$ when $\mu$ is an eigenvalue
of \eqref{eq:pertP} with
the right and left eigenvector
of the form $V_{\perp} s$ and $U_{\perp} t$,
where $s,t\in \CC^{n-k}$.
It follows that
\[
P_a(\mu)\left[ \begin{array}{cc} V_{\perp} s \\ 0 \end{array} \right]=0\quad{\rm and}\quad
 [\, t^*U_{\perp}^* \ \ 0 \,] \ P_a(\mu)=0,
\]
therefore $\mu$ is an eigenvalue of
$P_a(\lambda)$.

As all eigenvalues in c) are semisimple and all eigenvalues in a) and b) are simple, it follows by a counting argument that these are all the eigenvalues of $P_a(\lambda)$.\qed
\end{proof}

From the above proof it is easy to see how to extract the
true eigenvalues of $P(\lambda)$ from the eigenvalues
of $P_a(\lambda)$.
If $(\theta, x, y)$ is an eigentriplet of
$P_a(\lambda)$, where
\[
x = \left[ \begin{array}{cc} x_1 \\ x_2 \end{array} \right],\quad
y = \left[ \begin{array}{cc} y_1 \\ y_2 \end{array} \right] \]
are in block form in accordance with \eqref{augm1x}, then
$\theta$ is a true eigenvalue if and only if
$x_2=y_2=0$. Here we assume that the matrix polynomials $Q_1(\lambda)$ and
$Q_2(\lambda)$ are chosen in such way that all prescribed eigenvalues differ from the true eigenvalues of $P(\lambda)$.

\ch{Compared to the other two approaches, the augmented approach leads to a regular PEP with the largest matrices, but has the advantage that it is more suitable for problems with sparse matrices, which has recently been exploited for singular matrix pencils
in \cite{Meerbergen_Wang_Arxiv}.}

\section{Three methods}\label{sec:method} In this section we
present three numerical methods for singular PEPs that are generalizations of methods
for generalized eigenvalue problems from \cite{HMP_SingGep} and \cite{HMP_SingGep2}.
All methods return the regular eigenvalues of a singular PEP together with the reciprocals of the condition numbers.
Based on the reciprocals of the condition numbers and the gaps between the eigenvalues we can then classify the
regular eigenvalues into finite and infinite ones. This is done in Algorithm~4 that is based on the heuristic approach
proposed in \cite[Alg.~3]{HMP_SingGep2}, for more details see \cite[Sec.~6]{HMP_SingGep2}.

The first method is based on Theorem~\ref{thm:main_perturbation} and is
a generalization of \cite[Alg.~1]{HMP_SingGep}. In this method we perturb a
singular PEP into a regular PEP of the same size. In all algorithms, $\eps$ denotes the machine precision.

\noindent\vrule height 0pt depth 0.5pt width \textwidth \\
{\bf Algorithm~1: Eigenvalues of a singular PEP by a rank-completing perturbation}. \\[-3mm]
\vrule height 0pt depth 0.3pt width \textwidth \\
{\bf Input:} $A_0, \dotsc, A_d\in \CC^{n\times n}$, $k = n-\text{nrank}(P)$, perturbation constant
$\tau$ (default $10^{-2}$),
threshold $\delta$ (default $\eps^{1/2}$).\\
{\bf Output:} Eigenvalues of $P(\lambda)=A_0 +\cdots +\lambda^d A_d$ with reciprocals of the condition numbers.\\
\begin{tabular}{ll}
{\footnotesize 1:} & Select random $n\times k$ matrices $U$ and $V$ with
orthonormal columns.\\
{\footnotesize 2:} & Select random $k\times k$ matrices $B_0, \dotsc,B_d$.\\
{\footnotesize 3:} & Compute the eigenvalues $\lambda_i$, $i=1, \dotsc, dn$, and normalized right and left\\
& eigenvectors $x_i$ and $y_i$ of
$\widetilde P(\lambda)=P(\lambda) + \tau \, U(B_0+\lambda B_1 +\cdots +\lambda^d B_d)\,V^*$.\\
{\footnotesize 4:} & Compute $\alpha_i=\|V^*x_i\|$, $\beta_i=\|U^*y_i\|$,
$i=1, \dotsc, dn$. \\
{\footnotesize 5:} & Compute $\gamma_i=|y_i^*P'(\lambda_i)\,x_i| \cdot (1+|\lambda_i|^2+\cdots+|\lambda_i|^{2d})^{-1/2}$ for $i=1, \dotsc, dn$.\\
{\footnotesize 6:} & Return $\lambda_i$ and $\gamma_i$ for those
$i=1, \dotsc, dn$ such that $\max(\alpha_i,\beta_i) < \delta $.
\end{tabular} \\
\vrule height 0pt depth 0.5pt width \textwidth
\medskip

The second method, which is based on Proposition~\ref{prop_proj}, is a generalization of \cite[Alg.~1]{HMP_SingGep2}. In this method we project an $n\times n$ singular PEP
of normal rank $n-k$ to an $(n-k)\times(n-k)$ regular PEP.

\noindent\vrule height 0pt depth 0.5pt width \textwidth \\
{\bf Algorithm~2: Eigenvalues of a singular PEP by projection}. \\[-3mm]
\vrule height 0pt depth 0.3pt width \textwidth \\
{\bf Input:} $A_0, \dotsc,A_d\in \CC^{n\times n}$, $k = n-\text{nrank}(P)$,
threshold $\delta$ (default $\eps^{1/2}$).\\
{\bf Output:} Eigenvalues of $P(\lambda)=A_0 +\cdots +\lambda^d A_d$ with reciprocals of the condition numbers. \\
\begin{tabular}{ll}
{\footnotesize 1:} & Select random unitary $n\times n$ matrices $\widehat W$ and $\widehat Z$, and partition them
as\\
{} & $\widehat W = [W \ \, W_\perp]$ and $\widehat Z=[Z \ \, Z_\perp]$,
where $W$ and $Z$ have $n-k$ columns.\\
{\footnotesize 2:} & Compute the eigenvalues $\lambda_i$, $i=1, \dotsc, d(n-k)$, and normalized right and left\\
& eigenvectors $x_i$ and $y_i$ of
$\widetilde P(\lambda)=W^*A_0Z+\lambda \,W^*A_1Z +\cdots +\lambda^d \, W^*A_dZ$.\\
{\footnotesize 3:} & Compute $\alpha_i=\|W_\perp^*P(\lambda_i) Zx_i\|$, $\beta_i=\|y_i^*W^*P(\lambda_i)\,Z_\perp\|$,
$i=1, \dotsc, d(n-k)$. \\
{\footnotesize 4:} & Compute $\gamma_i=|y_i^*W^*P'(\lambda_i)\,Zx_i|\,(1+|\lambda_i|^2+\cdots+|\lambda_i|^{2d})^{-1/2}$ for $i=1, \dotsc, d(n-k)$.\\
{\footnotesize 5:} & Return $\lambda_i$ and $\gamma_i$ for those
$i=1, \dotsc, d(n-k)$ such that\\
& $\max(\alpha_i,\beta_i) < \delta \, (\,\|A_0\|+|\lambda_i|\,\|A_1\|+\cdots+|\lambda_i|^d\,\|A_d\|\,)$.
\end{tabular} \\
\vrule height 0pt depth 0.5pt width \textwidth
\medskip

The third and final approach, based on Proposition~\ref{prop_augm}, is an extension of \cite[Alg.~2]{HMP_SingGep2}. Here we augment an $n\times n$ singular PEP
of normal rank $n-k$ to an $(n+k)\times(n+k)$ regular PEP.

\noindent\vrule height 0pt depth 0.5pt width \textwidth \\
{\bf Algorithm~3: Eigenvalues of a singular PEP by augmentation}. \\[-3mm]
\vrule height 0pt depth 0.3pt width \textwidth \\
{\bf Input:} $A_0, \dotsc,A_d\in \CC^{n\times n}$, $k = n-\ch{\text{nrank}(P)}$,
threshold $\delta$ (default $\eps^{1/2}$).\\
{\bf Output:} Eigenvalues of $P(\lambda)=A_0 +\cdots +\lambda^d A_d$ with reciprocals of the condition numbers.\\
\begin{tabular}{ll}
{\footnotesize 1:} & Select random $n\times k$ matrices $U$ and $V$ with orthonormal columns.\\
{\footnotesize 2:} & Select random $k\times k$ matrices $B_0^{(i)}, \dotsc,B_d^{(i)}$ for $i=1,2$. \\
{\footnotesize 3:} & Compute the eigenvalues $\lambda_i$, $i=1,\dots,d(n+k)$, and normalized right and left \\
& eigenvectors $\smtxa{c}{x_{i1} \\ x_{i2}}$ and $\smtxa{c}{y_{i1} \\ y_{i2}}$ of the augmented matrix polynomial \eqref{augm1x}.
\\
{\footnotesize 4:} & Compute $\alpha_i=\|x_{i2}\|$, \ $\beta_i=\|y_{i2}\|$,
\ $i=1,\dots,d(n+k)$. \\
{\footnotesize 5:} & Compute $\gamma_i=|y_{i1}^*P'(\lambda_i)\,x_{i1}| \cdot (1+|\lambda_i|^2+\cdots+|\lambda_i|^{2d})^{-1/2}$, \ $i=1,\ldots, d(n+k)$.\\
{\footnotesize 6:} & Return $\lambda_i$ and $\gamma_i$ for those
$i=1,\ldots,d(n+k)$ such that $\max(\alpha_i,\beta_i)<\delta$.
\end{tabular} \\
\vrule height 0pt depth 0.5pt width \textwidth
\medskip

To classify a computed regular eigenvalue $\lambda_i$ as finite or infinite,
we use the computed reciprocal of the condition number $\gamma_i$ combined with
the relative gap of an eigenvalue, which is defined as
\[
{\rm gap}_i=\min_{j\ne i}\frac{|\lambda_j-\lambda_i|}{(1+|\lambda_i|^2)^{1/2}}.
\]
The criterion is based on the fact that $y_i^*P'(\lambda_i)\,x_i\ne 0$ if
$\lambda_i$ is a simple finite eigenvalue. A singular PEP can also have multiple
eigenvalues for which $\gamma_i$ might be $0$, yet in practice,
as explained in more details in \ch{\cite[Section 6]{HMP_SingGep2}}, an eigenvalue of multiplicity $m$
is numerically evaluated as $m$ simple eigenvalues with nonzero values $\gamma_i$ and this
way multiple finite eigenvalues are properly identified in most cases.
We use the relative gap to further improve the detection when
$\gamma_i$ is small. We expect that a computed representative $\lambda_i$ of a multiple finite
eigenvalue will have
a small $\gamma_i$ but also a small
${\rm gap}_i$. On the other hand, a (multiple) infinite eigenvalue will usually
appear as a finite eigenvalue with
$\gamma_i$ very close to zero and a large ${\rm gap}_i$ (close to $1$).

\ch{The computed values $\gamma_i$ in Algorithms~1--3 are approximations
for the values $\gamma(\lambda_i)$ from Definition \ref{df:kappa}, i.e., the condition numbers of the computed eigenvalues as eigenvalues of the transformed (perturbed, projected or augmented) PEP are approximations for the weak condition numbers of
the initial singular PEP.

Following the same arguments as in \cite{DanielBor_BIT}, where this analysis is done for singular pencils, one can show that
$\gamma_i=|\alpha_i|\,|\beta_i|\gamma(\lambda_i)$, where
$0\le |\alpha_i|,|\beta_i|\le 1$ and $\alpha_i,\beta_i$ depend on the choice of random $n\times k$ matrices $U$ and $V$.
If $U$ and $V$ are obtained from QR decompositions of independent complex Gaussian random matrices, then \cite[Prop.~3]{DanielBor_BIT} shows that
$|\alpha_i|$ and $|\beta_i|$ are independent random variables
and $|\alpha|^2,|\beta|^2\sim \mathrm{Beta}\left(1,k \right)$,
where $\mathrm{Beta}$ denotes the beta distribution.
It follows that with high
probability the sensitivity does not increase very much, i.e., the product $|\alpha_i|\cdot|\beta_i|$ is not very small.
\cm{It is thus unlikely that the eigenvalue condition numbers of the transformed PEP are much larger than
the weak eigenvalue condition numbers of the original singular PEP. Hence, it is safe to use
the computed eigenvalue conditions numbers to detect simple well-conditioned finite eigenvalues.}}

Algorithm~4 is based on \cite[Alg.~3]{HMP_SingGep2}, with slightly different default values.
Its input are regular eigenvalues and reciprocals of condition numbers that we get from any of
Algorithms~1--3.

\noindent\vrule height 0pt depth 0.5pt width \textwidth \\
{\bf Algorithm~4: Extraction of finite eigenvalues from the regular eigenvalues
of a singular PEP}. \\[-3mm]
\vrule height 0pt depth 0.3pt width \textwidth \\
{\bf Input:} Eigenvalues $\lambda_i$
 and reciprocals $\gamma_i$ of condition numbers, $i=1,\ldots,r$, for regular eigenvalues of PEP,
thresholds $\delta_1$ (default $\varepsilon$), $\delta_2$ (default $10^4\,\varepsilon$),
and $\xi_2$ (default 0.01).\\
{\bf Output:} Finite eigenvalues. \\
\begin{tabular}{ll}
{\footnotesize 1:} & Compute ${\rm gap}_{i}=\min_{j\ne i} |\lambda_j-\lambda_i| \cdot (1+|\lambda_i|^2)^{-1/2}$, \ $i=1,\ldots,r$.\\
{\footnotesize 2:} & If $\gamma_i<\delta_1$, \ $i=1,\ldots,r$, flag $\lambda_i$ as an infinite eigenvalue.\\
{\footnotesize 3:} & If $\gamma_i<\delta_2$ and ${\rm gap}_i>\xi_2$, \ $i=1,\ldots,r$, flag $\lambda_i$ as an infinite eigenvalue.\\
{\footnotesize 4:} & Return $\lambda_i$ for those $i=1,\ldots,r$ such that $\lambda_i$ is not flagged as an infinite eigenvalue.\\
\end{tabular} \\
\vrule height 0pt depth 0.5pt width \textwidth
\medskip

\section{Comparison to linearization}\label{sec:compare_lin}
A common way to solve a PEP is to apply one of the numerous possible linearizations. If a linearization is strong, then from
a singular matrix polynomial $P(\lambda)$ we get a singular matrix pencil
$L(\lambda):=E+\lambda F$ such that the finite and infinite eigenvalues of $L$ and $P$ coincide. Suppose that
$P$ is an $n\times n$ matrix polynomial of degree $d$ and normal rank $n-k$.
We can apply the following two procedures to compute the eigenvalues of $P$.
\begin{enumerate}
 \item[1)] First, apply Algorithm~2 and project $P(\lambda)$ into
 a regular $(n-k)\times (n-k)$ matrix polynomial $\widetilde P(\lambda)$
 of degree $d$. Then, to compute the eigenvalues and eigenvectors of $\widetilde P$, linearize it
 into a regular matrix pencil $L_1(\lambda)$ of size $d(n-k)\times d(n-k)$.
 \item[2)] First, linearize $P(\lambda)$ into a singular
 matrix pencil $L(\lambda)$ of size $dn\times dn$. Then, apply Algorithm~2 to $L$ and project it into a regular matrix pencil
 $L_2(\lambda)$.
\end{enumerate}

Suppose for example that in 2) we use a strong linearization from the space
$\mathbb L_1(P)$ introduced in \cite{MacMMM06}.
If $\eps_1\le \cdots \le\eps_k$ are the right minimal indices of
$P(\lambda)$, then by \cite[Thm.~5.10]{DeTeranDopicoMackey09}
the right minimal indices of $L(\lambda)$ are
$\eps_1+d-1\le\cdots\le \eps_k+d-1$, while the left minimal
indices of $P(\lambda)$ and $L(\lambda)$ are equal. It then follows from
the index sum theorem that the normal rank of
$L(\lambda)$ is $nd-k$ and thus $L_2(\lambda)$ is of size
$(nd-k)\times (nd-k)$. The results in \cite{DeTeranDopicoMackey09} show
that also types of strong linearizations other than from the space
$\mathbb L_1(P)$ will lead to projected pencils of the same size.

This shows that the two approaches are
not equivalent and if we apply 1) we end up with
a smaller regular pencil. This requires less computational work, in particular when
$d$ is large and the normal rank of $P(\lambda)$ is small. Also, in both
combinations we have to determine the normal rank at a certain point, which we
compute from the
rank of $P(\lambda)$ or $L(\lambda)$ evaluated at random values
$\lambda$. While in 1) we compute it from a rank of an $n\times n$ matrix, in 2) we
have to work with $nd\times nd$ matrices.

Based on the above, it seems to be more efficient to first project the polynomial into a regular one and then linearize it.

\section{Applications and numerical examples}\label{sec:num_results}
We test our numerical methods for singular PEPs on problems coming from several applications.
Two motivating problems are certain bivariate polynomial systems and \emph{ZGV points}, which may not yet be widely known.
We also treat some problems from \cite{DanielIvana} and other papers.
In addition, although there are additional references that mention the need of solving singular PEPs, they usually do not provide concrete examples. We hope that this paper may also encourage new research that will lead to additional applications that involve eigenvalues of singular matrix polynomials.

All numerical experiments have been carried out in MATLAB R2023a using standard double precision. To compute eigenvalues and eigenvectors of
PEPs in Algorithms~1--3 we use {\sf polyeig} in Matlab. We modified the method so that it
returns the left eigenvectors (which are computed in the original {\sf polyeig} for the estimation
of condition numbers but are not returned) as well.
For QEPs we use {\sf quadeig} from \cite{Quadeig}, which is more efficient and accurate than {\sf polyeig} and returns both right and left eigenvectors.

Inspired by \cite{DanielIvana} we performed $N=10000$ runs of Algorithms 1--3, followed by Algorithm~4, for each of the problems and counted the number of successful runs $M$ when the method returned the expected number of finite eigenvalues.
The ratio $p=M/N$ is the empirical probability that the algorithm returns a correct answer. For problems, for which we know the exact solution, we also computed the maximal absolute error of the computed eigenvalues (in all successful attempts) and we report this as the maximal error.
We used the threshold parameter $\delta=10^{-10}$ for Algorithm~1 and $\delta=10^{-12}$ for Algorithms~2 and 3. In Algorithm 4 we used the default values for $\delta_1,\delta_2$, and $\xi_2$, \ch{i.e.,
$\delta_1=10^{-16}$, $\delta_2=10^{-12}$, and $\xi_2=10^{-2}$}.
Instead of fine-tuning the parameters for each \ch{numerical example individually, we empirically selected values that perform well across all examples.}

\subsection{Polynomial equations}
Consider a system of equations involving bivariate polynomials of the form
\[
p(\lambda^2, \mu) = 0, \qquad q(\lambda, \mu) = 0,
\]
where in all monomials of the first polynomial the degree of $\lambda$ is even.
The uniform determinantal representations of \cite{BDDHP_Uniform}
render $A_i$, $B_i$, $C_i$, for $i=1,2$, such that
\begin{align*}
p(\lambda^2, \mu) & = \det(A_1 + \lambda^2 \, B_1 + \mu \, C_1), \\
q(\lambda, \mu) & = \det(A_2 + \lambda \, B_2 + \mu \, C_2),
\end{align*}
and the associated two-parameter eigenvalue problem
\begin{align*}
(A_1 + \lambda^2 \, B_1 + \mu \, C_1)\,x & = 0, \\
(A_2 + \lambda \, B_2 + \mu \, C_2)\,y & = 0.
\end{align*}
If we take the Kronecker product of the first equation and $C_2y$ and
subtract the Kronecker product of $C_1x$ and the second equation,
we obtain the QEP
\begin{equation}\label{eq:michiel_lambda2}
(A_1 \otimes C_2 - C_1 \otimes A_2)\,z - \lambda \, (C_1 \otimes B_2)\,z + \lambda^2 \, (B_1 \otimes C_2)\,z =0,
\end{equation}
where $z=x\otimes y$.
This QEP is typically singular. We give a simple example.

\begin{example}\label{ex:michiel2} \rm
Consider the polynomial system
\begin{align*}
p(\lambda^2,\mu) & = 1+2\lambda^2 + 3\mu + 4\lambda^4 + 5\lambda^2\mu + 6\mu^2, \\
q(\lambda,\mu) & = 6+5\lambda + 4\mu + 3\lambda^2 + 2\lambda\mu + \mu^2,
\end{align*}
which has 8 finite solutions
\begin{align*}
(\lambda_{1,2}, \, \mu_{1,2})&=(-0.658067\pm 0.750641i,\ -0.816143\pm 0.414507i), \\
(\lambda_{3,4}, \, \mu_{3,4})&=(-1.332648\pm 0.355433i,\ -0.287759\pm 1.672047i), \\
(\lambda_{5,6}, \, \mu_{5,6})&=(\phantom-0.475211\pm 1.902116i,\ -0.097155\mp 3.062254i), \\
(\lambda_{7,8}, \, \mu_{7,8})&=(\phantom-2.765503\pm 0.580944i,\ -5.548943\pm 3.891223i).
\end{align*}
Then
\begin{align*}
p(\lambda^2,\mu) &= -\,\left|
\smtxa{rrr}{0 & 2 & 1\\ 3 & 1 & 0 \\ 1 & 0 & 0} + \lambda^2 \, \smtxa{rrr}{0 & 4 & 0 \\ 5 & 0 & -1\\ 0 & 0 & 0} +
\mu \, \smtxa{rrr}{0 & 0 & 0 \\ 6 & 0 & 0\\ 0 & -1 & 0}
\right|\ , \\
q(\lambda,\mu) &= -\,\left|
\smtxa{rrr}{0 & 5 & 1\\ 4 & 6 & 0 \\ 1 & 0 & 0} + \lambda \, \smtxa{rrr}{0 & 3 & 0 \\ 2 & 0 & -1\\ 0 & 0 & 0} +
\mu \, \smtxa{rrr}{0 & 0 & 0 \\ 1 & 0 & 0\\ 0 & -1 & 0}
\right|.
\end{align*}
The corresponding QEP \eqref{eq:michiel_lambda2} is
\begin{equation}\label{eq:ex:michiel2}
P(\lambda)=
\smtxa{ccccccccc}{
0 & 0 & 0 & 0 & 0 & 0 & 0 & 0 & 0 \\
0 & 0 & 0 & 2 + \lambda^2 & 0 & 0 & 1 & 0 & 0 \\
0 & 0 & 0 & 0 & -2-4\lambda^2 & 0 & 0 & -1 & 0 \\
0 & -30-18\lambda & -6 & 0 & 0 & 0 & 0 & 0 & 0 \\
-21-12\lambda+5\lambda^2 & -36 & 6\lambda & 1 & 0 & 0 & -\lambda^2 & 0 & 0 \\
-6 & -3-5\lambda^2 & 0 & 0 & -1 & 0 & 0 & \lambda^2 & 0 \\
0 & 0 & 0 & 0 & 5+3\lambda & 1 & 0 & 0 & 0 \\
1 & 0 & 0 & 4+2\lambda & 6 & -\lambda & 0 & 0 & 0 \\
0 & -1 & 0 & 1 & 0 & 0 & 0 & 0 & 0},
\end{equation}
with normal rank equal to $8$ so that $P(\lambda)$ is a singular QEP.
If we apply Algorithm~2, we obtain the finite eigenvalues in \ch{Table~\ref{tab:michiel2}} which agree with the $\lambda$-components of the solution.

\begin{table}[htb!]
\centering
\caption{Results of Algorithm~2 (projection to normal rank) applied to the singular QEP \eqref{eq:ex:michiel2} with matrices from Example~\ref{ex:michiel2}.\label{tab:michiel2}}
{\footnotesize \begin{tabular}{c|clllll} \hline \rule{0pt}{2.3ex}%
$j$ & $i k_j$ & $\quad \ \gamma_j$ & $\quad \ \alpha_j$ & $\quad \ \beta_j$ & ${\rm gap}_j$ & Type \\[0.5mm]
\hline \rule{0pt}{2.5ex}%
1 & $-0.658067 + 0.750641i$ & $3.5\cdot 10^{-2}$ & $7.5\cdot 10^{-17}$ & $4.2\cdot 10^{-17}$ & $0.24$ & Finite true \\
2 & $-0.658067 - 0.750641i$ & $1.8\cdot 10^{-2}$ & $3.9\cdot 10^{-17}$ & $3.9\cdot 10^{-17}$ & $0.24$ & Finite true \\
3 & $-1.332648 + 0.355434i$ & $9.1\cdot 10^{-3}$ & $6.5\cdot 10^{-17}$ & $3.7\cdot 10^{-17}$ & $0.21$ & Finite true \\
4 & $-1.332648 - 0.355434i$ & $4.6\cdot 10^{-3}$ & $2.7\cdot 10^{-17}$ & $2.5\cdot 10^{-17}$ & $0.21$ & Finite true \\
5 & $\ph-0.475211 + 1.902116i$ & $3.2\cdot 10^{-3}$ & $4.0\cdot 10^{-17}$ & $2.3\cdot 10^{-17}$ & $0.44$ & Finite true \\
6 & $\ph-0.475211 - 1.902116i$ & $2.6\cdot 10^{-3}$ & $5.4\cdot 10^{-17}$ & $2.8\cdot 10^{-17}$ & $0.44$ & Finite true \\
7 & $\ph-2.765503 + 0.580944i$ & $9.0\cdot 10^{-4}$ & $1.5\cdot 10^{-17}$ & $1.6\cdot 10^{-17}$ & $0.28$ & Finite true \\
8 & $\ph-2.765503 - 0.580944i$ & $8.8\cdot 10^{-4}$ & $6.1\cdot 10^{-17}$ & $1.6\cdot 10^{-17}$ & $0.28$ & Finite true \\
9 & $\ph-2.19328\cdot 10^7+2.41952\cdot 10^6i$ & $8.8\cdot 10^{-24}$ & $9.5\cdot 10^{-17}$ & $9.7\cdot 10^{-17}$ & $0.35$ & Infinite true \\
10 & $-2.19328\cdot 10^7-2.41952\cdot 10^6i$ & $4.6\cdot 10^{-24}$ & $1.1\cdot 10^{-16}$ & $1.2\cdot 10^{-16}$ & $0.35$ & Infinite true \\
11 & $\ph-1.91184\cdot 10^7-4.69967\cdot 10^6i $ & $7.3\cdot 10^{-24}$ & $1.7\cdot 10^{-16}$ & $2.6\cdot 10^{-17}$ & $0.39$ & Infinite true \\
12 & $-1.91184\cdot 10^7+4.69967\cdot 10^6i $ & $4.3\cdot 10^{-24}$ & $1.8\cdot 10^{-16}$ & $7.8\cdot 10^{-17}$ & $0.39$ & Infinite true \\
13 & $\infty$ & $0.0$ & $9.6\cdot 10^{-17}$ & $1.3\cdot 10^{-16}$ & $1.00$ & Infinite true \\
14 & $\infty$ & $0.0$ & $1.5\cdot 10^{-16}$ & $5.6\cdot 10^{-17}$ & $1.00$ & Infinite true \\
15 & $\infty$ & $0.0$ & $5.6\cdot 10^{-17}$ & $6.2\cdot 10^{-17}$ & $1.00$ & Infinite true \\
16 & $\infty$ & $0.0$ & $1.4\cdot 10^{-17}$ & $5.9\cdot 10^{-17}$ & $1.00$ & Infinite true \\
\hline
\end{tabular}}
\end{table}

\ch{We remark that the last four eigenvalues in Table~\ref{tab:michiel2} are identified as infinite eigenvalues already during the deflation process in \texttt{quadeig}. More sophisticated deflation methods, such as, e.g., in \texttt{kvadaig} \cite{DrmacSainGlibic2020} for quadratic and \texttt{kvarteig} \cite{DrmacSainGlibic2022} for quartic eigenvalue problems, can
reliably extract even more infinite eigenvalues. While in principle one
can use an arbitrary numerical method for regular PEPs in Algorithms~1--3
as long as the method returns
eigenvalues together with right and left eigenvectors, one could benefit from
using an advanced method tailored for polynomials having particular properties. By using additional results that such a method provides, such as, e.g., the condition numbers, it should be possible to further improve both the separation of true and fake eigenvalues as well as the extraction of finite eigenvalues.
}

We \ch{run} Algorithms~1--3 for 10000 times on \eqref{eq:ex:michiel2} and compute the empirical probability of success $p_i$ and maximal error ${\rm maxerr}_i$ for $i=1,2,3$. The results are $p_1=p_2=p_3=1$,
${\rm maxerr}_1=1.0\cdot 10^{-11}$, ${\rm maxerr}_2=7.6\cdot 10^{-11}$, and
${\rm maxerr}_3=1.3\cdot 10^{-10}$.

In Table~\ref{tab:polyrank} we give the generic normal ranks corresponding to some low values of the total degree $d$ (in $\lambda^2$ and $\mu$ for $p$; in $\lambda$ and $\mu$ for $q$).
\medskip
\begin{table}[htb!]
\centering
\footnotesize
\caption{\ch{Observed normal ranks from singular pencils arising in determinantal representations from generic bivariate polynomials of degree $d$.}\label{tab:polyrank}}
\smallskip
\begin{tabular}{cccc} \hline \rule{0pt}{2.5ex}%
Degree $d$ & $n=(2d-1)^2$ & nrank & Missing \\ \hline \rule{0pt}{2.3ex}%
 2 & \ph{11}9 & \ph18 & \ph11 \\
 3 & \ph{1}25 & 21 & \ph14 \\
 4 & \ph{1}49 & 40 & \ph19 \\
 5 & \ph{1}81 & 65 & 16 \\
 6 & 121 & 96 & 25 \\
\hline
\end{tabular}
\end{table}
\medskip
For $n = (2d-1)^2$, the normal rank seems to be $\text{nrank} = d\,(3d-2)$, with missing rank $(d-1)^2$.
Therefore, this is a problem of dimension $\sim 4d^2$, with normal rank $\sim 3d^2$ and rank-completing dimension $\sim d^2$.
\ch{These normal ranks are observed in experiments.
A detailed study of the structure of the pencils, which might give evidence of this, is outside the scope of this paper.}
\end{example}

\subsection{ZGV points}\label{subs:ZGV}
In the study of anisotropic elastic waveguides (see, e.g., \cite{prada_local_2008}) we
obtain an eigenvalue problem
\begin{equation}\label{eq:ZGV1}
\big((ik)^2 L_2+ ik L_1+L_0+\omega^2 M\big)\,u=0,
\end{equation}
where $L_0$, $L_1$, $L_2$, $M$ are real $n\times n$ matrices obtained by a discretization
of a boundary value problem. The solution
 are dispersion curves $\omega=\omega(k)$, where
the angular frequency $\omega$ of a guided wave is related to the wavenumber $k$.
We are interested in the {\em zero-group-velocity} (ZGV) points on dispersion curves, where
$\omega$ and $k$ are real and
$\omega'(k)=0$. If we assume that $u=u(k)$ and $\omega(k)$ are differentiable, we obtain from
\eqref{eq:ZGV1} (see \cite{ZGV_JASA_23} for details) that at a ZGV point $(\omega,k)$ we have
\begin{equation}\label{eq:ZGV2}
\big((ik)^2\, \widetilde L_2+ ik\, \widetilde L_1+\widetilde L_0+\omega^2\, \widetilde M\big)\,\widetilde u=0,
\end{equation}
where
\[
\widetilde{L}_2=\left[\begin{matrix}L_2 & 0 \\ 0 & L_2\end{matrix}\right],\quad
\widetilde{L}_1=\left[\begin{matrix}L_1 & 0 \\ 2L_2 & L_1\end{matrix}\right],\quad
\widetilde{L}_0=\left[\begin{matrix}L_0 & 0 \\ L_1 & L_0\end{matrix}\right],\quad
\widetilde{M}=\left[\begin{matrix}M & 0 \\ 0 & M\end{matrix}\right],\quad
\widetilde{u}=\left[\begin{matrix}u \\ u'\end{matrix}\right].
\]
Equations \eqref{eq:ZGV1}--\eqref{eq:ZGV2} form
a quadratic two-parameter eigenvalue problem \cite{MP_Q2EP, HMP_LinQ2MEP}.
If we take the Kronecker product of \eqref{eq:ZGV1} and $\widetilde M \widetilde u$, and subtract the Kronecker product of $Mu$ and \eqref{eq:ZGV2}, then we get a $2n^2\times 2n^2$ QEP
\begin{equation}\label{eq:ZGV_QEP}
\big((ik)^2\,\Gamma_2 + ik\, \Gamma_1 +\Gamma_0\big)\,z=0,
\end{equation}
where $z=u\otimes \widetilde u$ and
\[
\Gamma_2=L_2\otimes \widetilde M - \widetilde M\otimes L_2,\quad
\Gamma_1=L_1\otimes \widetilde M - \widetilde M\otimes L_1,\quad
\Gamma_0=L_0\otimes \widetilde M - \widetilde M\otimes L_0 .
\]
With respect to the entries of the matrices $L_0$, $L_1$, $L_2$, $M$, the normal rank
of the quadratic matrix polynomial
\eqref{eq:ZGV_QEP} generically is $2n^2-n$ and hence the problem is singular.

\begin{example}\rm\label{ex:simple_QEP_ZGV}
For a simple example we take
\[
L_2 = \smtxa{rr}{1 & 1 \\ 1 & 2},\quad
L_1 = \smtxa{rr}{0 & 3 \\ -3 & 0},\quad
L_0 = \smtxa{rr}{-2 & 1 \\ 1 & -2},\quad
M = \smtxa{rr}{3 & 1 \\ 1 & 4}.
\]
We select the matrices so that $L_0$ is symmetric, $L_1$ is skew-symmetric, and $L_2, M$ are symmetric positive definite. As a result, the matrices have the same properties as the larger matrices in \cite{ZGV_JASA_23}, where ZGV points of antisymmetric Lamb waves in an austenitic steel plate are computed.

The corresponding quadratic matrix polynomial \eqref{eq:ZGV_QEP} with $8\times 8$ matrices $\Gamma_2,\Gamma_1,\Gamma_0$ has normal rank $6$. The problem has a double eigenvalue at $0$
of geometric multiplicity 2 because $\rank(\Gamma_0)=4$. Algorithm~1 returns
the eigenvalues displayed in \ch{Table~\ref{tab:simple_QEP_ZGV}}.

\begin{table}[htb!]
\centering
\caption{Results of Algorithm~1 (rank-completing perturbation) applied to the singular QEP \eqref{eq:ZGV_QEP} with matrices from Example~\ref{ex:simple_QEP_ZGV}\label{tab:simple_QEP_ZGV}}
\smallskip
{\footnotesize \begin{tabular}{c|clllll} \hline \rule{0pt}{2.3ex}%
$j$ & $i k_j$ & $\quad \ \gamma_j$ & $\quad \ \alpha_j$ & $\quad \ \beta_j$ & ${\rm gap}_j$ & Type \\[0.5mm]
\hline \rule{0pt}{2.5ex}%
1 & $0$ & $6.7\cdot 10^{-2}$ & $1.6\cdot 10^{-15}$ & $2.6\cdot 10^{-15}$ & $0.00$ & Finite true \\
2 & $0$ & $6.3\cdot 10^{-2}$ & $1.8\cdot 10^{-15}$ & $1.6\cdot 10^{-15}$ & $0.00$ & Finite true \\
3 & $-4.007967\cdot 10^{-16} + 1.016018i$ & $6.1\cdot 10^{-2}$ & $1.6\cdot 10^{-14}$ & $1.3\cdot 10^{-14}$ & $0.37$ & Finite true \\
4 & $\ph-1.716439\cdot 10^{-15}- 1.016018i$ & $4.7\cdot 10^{-2}$ & $6.5\cdot 10^{-15}$ & $4.2\cdot 10^{-14}$ & $0.33$ & Finite true \\
5 & $-4.004034 - 7.895649\cdot 10^{-15}i$ & $5.3\cdot 10^{-3}$ & $1.2\cdot 10^{-14}$ & $5.3\cdot 10^{-14}$ & $0.61$ & Finite true \\
6 & $\ph-4.004034 - 1.035145\cdot 10^{-14}i$ & $4.4\cdot 10^{-3}$ & $1.8\cdot 10^{-14}$ & $2.5\cdot 10^{-14}$ & $0.87$ & Finite true \\
7 & $-\infty$ & $0.0$ & $4.2\cdot 10^{-15}$ & $4.8\cdot 10^{-15}$ & $1.00$ & Infinite true \\
8 & $\ph-\infty$ & $0.0$ & $4.8\cdot 10^{-15}$ & $2.6\cdot 10^{-15}$ & $1.00$ & Infinite true \\
9 & $-0.202213 -0.301941 i$ & $3.7\cdot 10^{-3}$ & $5.3\cdot 10^{-15}$ & $0.54$ & $0.18$ & Random right \\
10 & $-3.322434 +8.098093 i$ & $1.2\cdot 10^{-4}$ & $8.2\cdot 10^{-15}$ & $0.56$ & $0.72$ & Random right \\
11 & $\ph-0.114927 + 0.193407 i$ & $1.1\cdot 10^{-2}$ & $0.29$ & $8.6\cdot 10^{-15}$ & $0.11$ & Random left \\
12 & $\ph-9.990344 - 1.667635\cdot 10^{1} i$ & $4.7\cdot 10^{-5}$ & $0.23$ & $2.4\cdot 10^{-14}$ & $0.91$ & Random left \\
13 & $-0.815756 + 2.188693 i$ & $3.2\cdot 10^{-3}$ & $0.24$ & $0.72$ & $0.39$ & Prescribed \\
14 & $-0.815756 - 2.188693 i$ & $3.1\cdot 10^{-3}$ & $0.29$ & $0.60$ & $0.39$ & Prescribed \\
15 & $-1.552148 - 1.225251 i$ & $9.4\cdot 10^{-4}$ & $0.17$ & $0.55$ & $0.43$ & Prescribed \\
16 & $-1.552148 + 1.225251 i$ & $7.9\cdot 10^{-4}$ & $0.15$ & $0.52$ & $0.43$ & Prescribed \\
\hline
\end{tabular}}
\end{table}

Notice that the double eigenvalue $0$ is computed exactly due to a preprocessing step in {\tt quadeig} that reveals and deflates the zero and infinite eigenvalues contributed by singular leading and trailing matrix coefficients. Notice also that, although $0$ is a double eigenvalue, the corresponding reciprocals of the condition numbers $\gamma_1$ and $\gamma_2$ are far away from zero because the eigenvalue is semisimple.

There are four eigenvalues $ik$ on the imaginary axis
that give $k$-coordinates of ZGV points $(\omega,k)$. For each of these values of $k$ we solve the generalized eigenvalue
problem \eqref{eq:ZGV1} to compute $\omega$ and a corresponding eigenvector $u$.
Since all matrices are symmetric, $u$ is also a left eigenvector and the condition
for a ZGV point is
\[
u^*\,(2ikL_2+L_1)\,u=0,
\]
see \cite{ZGV_JASA_23}. We get four real ZGV points
$(1.110602, 0)$, $(0.470226, 0)$, $(0.364791,\pm 1.016018)$ which are shown together with real dispersion curves in Figure~\ref{fig:one}.

\begin{figure}[htb!]
 \centering
 \includegraphics[width=7cm]{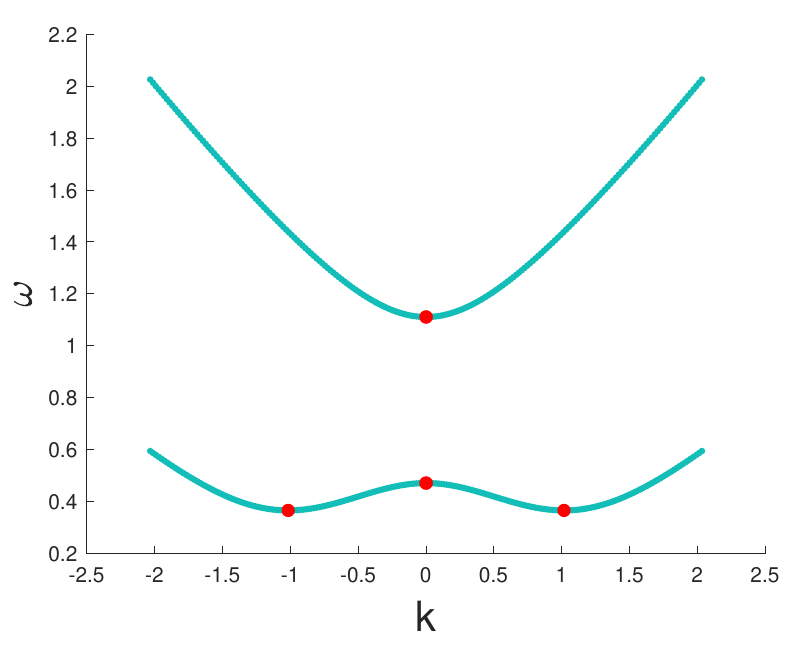}\label{fig:simple_QEP_ZGV}
 \caption{Real dispersion curves $\omega(k)$ and ZGV points of Example~\ref{ex:simple_QEP_ZGV}}
 \label{fig:one}
\end{figure}
\end{example}

As in the previous example, based on 10000 runs, we get $p_1=p_2=p_3=1$, ${\rm maxerr}_1=1.7\cdot 10^{-12}$,
${\rm maxerr}_2=1.2\cdot 10^{-11}$, and ${\rm maxerr}_2=7.0\cdot 10^{-12}$.

\subsection{Set of singular QEP examples}\label{sec:DanielIvana}
In a recent paper \cite{DanielIvana},
Kressner and \v{S}ain Glibi\'c construct several examples
of singular QEPs to test their numerical method, where they perturb a singular QEP into a regular problem by a small
random full rank perturbation, compute the eigenvalues
together with the left and right eigenvectors and then
identify the finite eigenvalues based on the size of
the condition number \eqref{def:kp}. This might be compared to
Algorithm~1 followed by Algorithm~4, but since a full rank perturbations are used
in \cite{DanielIvana}, the eigenvalues of the perturbed problem are perturbed as well.
We denote the algorithm from \cite{DanielIvana} as Algorithm K\v{S}G.

For each of the problems they perform 1000 runs of the algorithm
with different random perturbation matrices and computed empirical probabilities as the ratio of runs when the algorithm found all the finite eigenvalues. We test Algorithm K\v{S}G and Algorithms 1--3, followed by Algorithm~4, on the most challenging examples from \cite{DanielIvana} in a similar way. As another option we test standard linearizations from $\mathbb L_1(P)$ combined with the projection algorithm for a singular generalized eigenvalue problem from \cite{HMP_SingGep2}, this option is denoted ``Linearization''.

\begin{example}\label{ex:DanielIvana}\rm Our selected QEPs are (the first four problems are \cite[Exs.~5--8]{DanielIvana}, while 5) is new):
\begin{enumerate}
\item[1)] $Q(\lambda)=Z^*(K+\lambda C+\lambda^2M)\,W$ with $8\times 8$ matrices
and finite eigenvalues $\lambda_j=1+10^{-5}j$ for $j=1, \dotsc,5$. The nonzero elements of $K,C,M$ are
$m_{j,j+1}=c_{jj}=1$ and
$c_{j,j+1}=k_{jj}=-\lambda_j$ for $j=1, \dotsc,5$,
{\sf W = orth(rand(8))} and {\sf Z = orth(rand(8))}.
\item[2)] $Q(\lambda)=Z^*(K+\lambda C+\lambda^2M)\,W$ with $11\times 11$ matrices
and finite eigenvalues $\lambda_1=0$, $\lambda_j=1/j$ for $j=2, \dotsc,8$. The nonzero elements of $K,C,M$ are
$m_{j,j+1}=c_{jj}=1$ and
$c_{j,j+1}=k_{jj}=-\lambda_j$ for $j=1, \dotsc,8$,
{\sf W = orth(rand(11))} and {\sf Z = orth(rand(11))}.
\item[3)] The reversal of 2), i.e.,
$Q(\lambda)=Z^*(M + \lambda C + \lambda^2K)\,W$.
Finite eigenvalues are $\lambda_j=j+1$ for $j=1, \dotsc,7$.
\item[4)] $Q(\lambda)=Z^*(DMD + \lambda DCD + \lambda^2DKD)\,W$, where $D$ is the scaling matrix $D=\diag(1,a^2,a,1,a^3,1,a^4,a^5,a^6,1,1)$ for $a=2,4,6,8$. The other matrices and
finite eigenvalues are the same as in 3). The diagonal scaling makes the eigenvalues more ill conditioned and more difficult to detect and compute accurately.
\item[5)] $Q(\lambda)=Z^*(D^{-1}KD + \lambda D^{-1}CD + \lambda^2D^{-1}MD)\,W$ with $8\times 8$ matrices $K,C,M$,
whose nonzero elements are
$m_{j,j+1}=c_{jj}=1$, $m_{13}=m_{24}=1$, and
$c_{j,j+1}=k_{jj}=-1$ for $j=1, \dotsc,5$, and
$D=\diag(1,a^3,a^6,a^2,a^5,a,a^4,a^7)$ for $a=1,2,3$. The problem has finite eigenvalue $1$ with algebraic multiplicity $4$ and geometric multiplicity $3$.
\end{enumerate}

For each problem we carry out $N=10000$ runs of each algorithm and counted the number
of runs $F$ when the method fails to return the exact number of eigenvalues. The ratio $p=1-F/N$ is the empirical probability that the algorithm returns the correct answer. We also compute the maximal absolute error of the computed eigenvalues in all successful attempts, we report this as the maximal error. The results are listed in \ch{Table~\ref{tab:examplesDK}}. For Algorithm K\v{S}G we used parameters $\eps=10^{-12}$
for the size of the perturbation and ${\sf tol}=10^{10}$
for the tolerance. With these settings we obtained better
empirical probabilities and smaller error than reported in \cite{DanielIvana}.
For Linearization we used the threshold parameter $\delta=10^{-12}$.

\begin{table}[htb!]
\centering
\caption{Results of Algorithm K\v{S}G \cite[Alg.~2]{DanielIvana}, Algorithm~1 (rank-completing perturbation), Algorithm~2 (projection to normal rank), Algorithm~3 (augmentation), and Linearization (\cite[Alg.~2]{HMP_SingGep2} applied to an $\mathbb L_1$ linearization) applied to the singular QEPs from Example~\ref{ex:DanielIvana}.\label{tab:examplesDK}}
\smallskip
{\footnotesize \begin{tabular}{l|cl|cl|cl|cl|cl} \hline \rule{0pt}{2.3ex}%
 & \multicolumn{2}{c|}{Algorithm K\v{S}G } &
 \multicolumn{2}{c|}{Algorithm~1} & \multicolumn{2}{c|}{Algorithm~2} & \multicolumn{2}{c|}{Algorithm~3} & \multicolumn{2}{c}{Linearization} \\
 Problem & $F$ & max error & $F$ & max error & $F$ & max error & $F$ & max error & $F$ & max error \\[0.5mm]
\hline \rule{0pt}{2.5ex}%
 1) & $\ph{123}0$ & $1.5\cdot 10^{-10}$ & $0$ & $6.3\cdot 10^{-13}$ & $0$ & $1.5\cdot 10^{-13}$ & $0$ & $2.5\cdot 10^{-13}$ & $\ph00$ & $8.6\cdot 10^{-13}$ \\
 2) & $\ph{123}0$ & $4.8\cdot 10^{-11}$ & $0$ & $1.7\cdot 10^{-13}$ & $0$ & $3.4\cdot 10^{-14}$ & $0$ & $2.0\cdot 10^{-13}$ & $\ph00$ & $4.8\cdot 10^{-13}$\\
 3) & $\ph{123}0$ & $4.8\cdot 10^{-9}$ & $0$ & $4.8\cdot 10^{-12}$ & $0$ & $8.8\cdot 10^{-13}$ & $0$ & $1.6\cdot 10^{-11}$ & $\ph00$ & $6.3\cdot 10^{-12}$\\
 4) $(a=2)$ & $\ph{123}1$ & $2.5\cdot 10^{-7}$ & $0$ & $9.0\cdot 10^{-11}$ & $0$ & $6.7\cdot 10^{-11}$ & $0$ & $2.2\cdot 10^{-10}$ & $\ph00$ & $5.1\cdot 10^{-10}$\\
 4) $(a=4)$ & $\ph{123}3$ & $1.6\cdot 10^{-4}$ & $0$ & $6.6\cdot 10^{-8}$ & $0$ & $1.4\cdot 10^{-7}$ & $0$ & $2.0\cdot 10^{-7}$ & $\ph00$ & $6.1\cdot 10^{-7}$ \\
 4) $(a=6)$ & $\ph{12}68$ & $2.6\cdot 10^{-3}$ & $0$ & $9.3\cdot 10^{-6}$ & $0$ & $1.0\cdot 10^{-5}$ & $0$ & $2.1\cdot 10^{-5}$ & $\ph00$ & $1.7\cdot 10^{-5}$\\
 4) $(a=8)$ & $1204$ & $1.6\cdot 10^{-2}$ & $3$ & $1.9\cdot 10^{-4}$ & $1$ & $1.5\cdot 10^{-4}$ & $3$ & $3.4\cdot 10^{-4}$ & $20$ & $6.5\cdot 10^{-4}$\\
 5) $(a=1)$ & $\ph{123}0$ & $5.7\cdot 10^{-6}$ & $0$ & $2.8\cdot 10^{-7}$ & $0$ & $3.9\cdot 10^{-7}$ & $0$ & $3.2\cdot 10^{-7}$ & $\ph00$ & $4.4\cdot 10^{-7}$\\
 5) $(a=2)$ & $\ph{12}50$ & $5.1\cdot 10^{-5}$ & $0$ & $2.9\cdot 10^{-6}$ & $0$ & $2.5\cdot 10^{-6}$ & $0$ & $5.7\cdot 10^{-6}$ & $\ph00$ & $7.2\cdot 10^{-6}$\\
 5) $(a=3)$ & $3020$ & $1.4\cdot 10^{-4}$ & $0$ & $2.4\cdot 10^{-5}$ & $0$ & $1.4\cdot 10^{-5}$ & $0$ & $3.5\cdot 10^{-5}$ & $\ph00$ & $8.1\cdot 10^{-5}$\\
 \hline
\end{tabular}}
\end{table}

The singular QEP in 5) has a quadruple eigenvalue $\lambda=1$. \ch{Table \ref{tab:35}} shows the output of Algorithm~2 for $a=3$.
We can see that the values $\gamma_i$ for the quadruple eigenvalue are close to $0$ as are the relative gaps, as expected \ch{in the discussion just before}
Algorithm~4. \ch{In particular, for $j=3$ and $j=4$ we see that $\gamma_j>\delta_1=10^{-16}$ and ${\rm gap}_j<\xi_2=10^{-2}$, therefore these two very ill-conditioned eigenvalues are identified as finite eigenvalues by Algorithm~4.}

\begin{table}[htb!]
\centering
\caption{Results of Algorithm~2 (projection to normal rank) applied to the singular
QEP 5) from Example~\ref{ex:DanielIvana} for $a=3$.\label{tab:35}}
\smallskip
{\footnotesize \begin{tabular}{c|llllll} \hline \rule{0pt}{2.3ex}%
$j$ & \hspace{17mm} $\lambda_j$ & $\quad \ \gamma_j$ & $\quad \ \alpha_j$ & $\quad \ \beta_j$ & ${\rm gap}_j$ & Type \\[0.5mm]
\hline \rule{0pt}{2.5ex}%
1 & $\ph-1.000000 + 1.036016\cdot 10^{-15}i$ & $1.1\cdot 10^{-4}$ & $1.4\cdot 10^{-16}$ & $7.7\cdot 10^{-17}$ &
$2.1\cdot 10^{-11}$ & Finite true \\
2 & $\ph-1.000000 + 1.549034\cdot 10^{-11}i$ & $1.3\cdot 10^{-8}$ & $1.8\cdot 10^{-16}$ & $6.6\cdot 10^{-17}$ &
$2.1\cdot 10^{-11}$ & Finite true \\
3 & $\ph-0.999999 - 6.066428\cdot 10^{-7}i$ & $7.2\cdot 10^{-13}$ & $1.4\cdot 10^{-16}$ & $4.0\cdot 10^{-17}$ &
$6.4\cdot 10^{-7}$ & Finite true \\
4 & $\ph-1.000000 + 6.066328\cdot 10^{-7}i$ & $7.2\cdot 10^{-13}$ & $1.3\cdot 10^{-16}$ & $4.3\cdot 10^{-17}$ &
$6.4\cdot 10^{-7}$ & Finite true \\
5 & $-0.748612 + 7.673687\cdot 10^{-2}i$ & $2.0\cdot 10^{-7}$ & $1.0\cdot 10^{-16}$ & $9.0\cdot 10^{-6}$ & $1.18$ & Random right\\
6 & $-0.355925 - 0.716723i$ & $1.0\cdot 10^{-7}$ & $1.6\cdot 10^{-16}$ & $5.2\cdot 10^{-6}$ & $1.11$ & Random right\\
7 & $ \ph-0.591362 + 0.242587i$ & $7.6\cdot 10^{-8}$ & $1.3\cdot 10^{-16}$ & $2.4\cdot 10^{-6}$ & $0.74$ & Random right\\
8 & $-0.060825 + 0.828186i$ & $6.9\cdot 10^{-8}$ & $1.2\cdot 10^{-16}$ & $3.9\cdot 10^{-6}$ & $1.05$ & Random right\\
9 & $\ph-0.579022 - 0.389633i$ & $6.2\cdot 10^{-8}$ & $1.3\cdot 10^{-16}$ & $2.5\cdot 10^{-6}$ & $0.82$ & Random right\\
10 & $\ph-3.012867 - 1.129741\cdot 10^{-2}i$ & $2.6\cdot 10^{-8}$ & $1.5\cdot 10^{-16}$ & $1.3\cdot 10^{-7}$ & $0.67$ & Random right\\
\hline
\end{tabular}}
\end{table}

\end{example}

\subsection{Other applications}

Although there are several references to applications of singular matrix polynomials in the literature, we managed to collect just few concrete examples.

\begin{example}\rm\label{ex:kk}
In \cite{KafetzisKarampetakis_MPinverse}, where a generalization of the Moore--Penrose inverse has been introduced for singular matrix polynomials, the following singular polynomial matrix of degree $5$ is mentioned \cite[Ex.~1] {KafetzisKarampetakis_MPinverse}:
\begin{equation}\label{eq:kk}
P(\lambda)=
\smtxa{ccc}{
1 + 4 \lambda + 5\lambda^2 + 2\lambda^3 & -1 -3\lambda -4\lambda^2 -3\lambda^3-\lambda^4 & -\lambda-2\lambda^2-\lambda^3 \\[0.5mm]
-1 -2\lambda + 2\lambda^2 + 5\lambda^3+ 2\lambda^4 & 1 + \lambda - \lambda^2 -3\lambda^3-3\lambda^4-\lambda^5 & \lambda -2\lambda^3-\lambda^4 \\[0.5mm]
-1 -2\lambda + \lambda^2+2\lambda^3 & 1 +\lambda - \lambda^3-\lambda^4 & \lambda - \lambda^3},
\end{equation}
where $\nrank(P(\lambda))=1$. The problem has one finite eigenvalue $\lambda=-1$, which is identified correctly
by Algorithm~2; results of a typical run are presented in \ch{Table~\ref{tab:kk}}.

\begin{table}[htb!]
\centering
\caption{Results of Algorithm~2 (projection to normal rank) applied to the singular PEP \eqref{eq:kk}.\label{tab:kk}}
\smallskip
{\footnotesize \begin{tabular}{c|llllll} \hline \rule{0pt}{2.3ex}%
$j$ & \hspace{17mm} $\lambda_j$ & $\quad \ \gamma_j$ & $\quad \ \alpha_j$ & $\quad \ \beta_j$ & ${\rm gap}_j$ & Type \\[0.5mm]
\hline \rule{0pt}{2.5ex}%
1 & $-1.000000 + 1.016777\cdot 10^{-16}i$ & $6.4\cdot 10^{-2}$ & $1.3\cdot 10^{-16}$ & $1.4\cdot 10^{-16}$ & $0.56$ & Finite true \\
2 & $-0.734887 -0.835476i$ & $8.8\cdot 10^{-2}$ & $1.2\cdot 10^{-16}$ & $5.1\cdot 10^{-2}$ & $0.54$ & Random right \\
3 & $-1.108317 + 1.038442i$ & $5.5\cdot 10^{-2}$ & $1.1\cdot 10^{-16}$ & $5.1\cdot 10^{-2}$ & $0.48$ & Random right \\
4 & $\ph-0.230639 - 1.285601i$ & $2.1\cdot 10^{-1}$ & $2.5\cdot 10^{-1}$ & $2.7\cdot 10^{-16}$ & $0.60$ & Random left\\
5 & $-0.190740 + 0.980385i$ & $1.6\cdot 10^{-1}$ & $1.7\cdot 10^{-1}$ & $9.2\cdot 10^{-17}$ & $0.59$ & Random left\\
\hline
\end{tabular}}
\end{table}

Note that in this case the projected polynomial is of size $1\times 1$, thus we need to find roots of a scalar polynomial of degree $5$. The corresponding left and right eigenvectors from Algorithm~2 are scalars
$x_i=y_i=1$. Still, together with the initial unitary transformations $U$ and $V$ they provide enough
information that we can classify the eigenvalues. If we instead first linearize $P(\lambda)$ into a linear pencil,
then we get a singular pencil with matrices $15\times 15$ that has normal rank $13$. This example shows that it may be considerably more efficient to apply Algorithm~2,
in particular for polynomials of high degree.

We perform $N=10000$ runs and obtained empirical probabilities $p_1=0.9983$, $p_2=1$ and $p_3=0.9968$ for Algorithms~1--3 with maximal errors ${\rm maxerr}_1=7.1\cdot 10^{-13}$, ${\rm maxerr}_2=4.2\cdot 10^{-13}$, and
${\rm maxerr}_3=4.9\cdot 10^{-13}$. By increasing the threshold $\delta$ to $10^{-8}$ for Algorithm~1 and $10^{-10}$ for Algorithm~3
we get $p_1=p_3=1$.

\end{example}

\begin{example}\rm\label{ex:zh}
In \cite[Ex.~7]{ZunigaHenrion} we find the following singular polynomial matrix
of degree $8$:
\begin{equation}\label{eq:zh}
P(\lambda)=
\smtxa{ccc}{
\lambda^2+ \lambda^8 & \lambda+\lambda^7 & \ph-\lambda^4 \\[0.5mm]
-\lambda -\lambda^7 & -1 - \lambda^6 & -\lambda^3\\[0.5mm]
\lambda^4 & \lambda^3 & \ph-1},
\end{equation}
where $\nrank(P(\lambda))=2$. The problem has 14 infinite eigenvalues but no finite eigenvalues.
The results
obtained by Algorithm~2 are presented in \ch{Table~\ref{tab:zh}}. The values in rows 2--11 that we left out are similar to the values in rows 1 and 12. We get two exact infinite eigenvalues in rows 13--14 because the leading matrix coefficient has rank 1, while
the remaining infinite eigenvalues are numerically evaluated as simple eigenvalues
$\lambda_1,\ldots,\lambda_{12}$ with absolute values ${\cal O}(10^1)$ that are correctly identified as infinite eigenvalues by Algorithm~4.
The empirical probabilities that the methods do not
return any finite eigenvalues are $p_1=p_2=p_3=1$ for Algorithms~1--3.

\begin{table}[htb!]
\centering
\caption{Results of Algorithm~2 (projection to normal rank) applied to the singular PEP \eqref{eq:zh}.\label{tab:zh}}
\smallskip
{\footnotesize \begin{tabular}{c|clllll} \hline \rule{0pt}{2.3ex}%
$j$ & $\lambda_j$ & $\quad \ \gamma_j$ & $\quad \ \alpha_j$ & $\quad \ \beta_j$ & ${\rm gap}_j$ & Type \\[0.5mm]
\hline \rule{0pt}{2.5ex}%
1 & $-3.01396 + 17.32212i$ & $1.1\cdot 10^{-17}$ & $1.7\cdot 10^{-16}$ & $2.1\cdot 10^{-16}$ & $0.51$ & Infinite true \\[-1mm]
$\vdots$ & $\vdots$ & \multicolumn{1}{c}{$\vdots$} & \multicolumn{1}{c}{$\vdots$} & \multicolumn{1}{c}{$\vdots$} & \multicolumn{1}{c}{$\vdots$} & \multicolumn{1}{c}{$\vdots$}\\
12 & $17.16068 +3.09912i$ & $9.0\cdot 10^{-19}$ & $1.3\cdot 10^{-16}$ & $2.1\cdot 10^{-16}$ & $0.52$ & Infinite true\\
13 & $\infty$ & $0.0$ & $1.7\cdot 10^{-16}$ & $6.8\cdot 10^{-16}$ & $1.00$ & Infinite true\\
14 & $\infty$ & $0.0$ & $1.3\cdot 10^{-15}$ & $2.6\cdot 10^{-16}$ & $1.00$ & Infinite true\\
15 & $0.364834 - 0.699938i$ & $1.5\cdot 10^{-1}$ & $4.3\cdot 10^{-16}$ & $1.2\cdot 10^{-1}$ & $0.65$ & Random right \\
16 & $0.258964 - 1.516862i$ & $2.2\cdot 10^{-3}$ & $3.0\cdot 10^{-3}$ & $3.3\cdot 10^{-16}$ & $0.45$ & Random right \\
\hline
\end{tabular}}
\end{table}

\end{example}

\ch{
\subsection{Incorrectly determined rank}

All presented methods rely on the normal rank. If we fail to
determine the normal rank correctly, then we observe a
behavior similar to the one reported in \cite[Sec.~8]{HMP_SingGep2} for singular pencils.

\begin{example}\rm\label{ex:new1}
In this example we show the effects of using an incorrect normal rank on the singular QEP 5) from Example~\ref{ex:DanielIvana} for $a=3$, whose
normal rank is 5 and has a quadruple eigenvalue $1$ of geometric multiplicity three. We apply Algorithm~2 with projection to normal rank and
use incorrect values 4 and 6 for the normal rank. The obtained results should be compared to values
in Table~\ref{tab:35}, where the correct normal rank has been applied.

The results for the underestimated normal rank $4$ are presented in Table~\ref{tab:35under}.
In this case only multiple eigenvalues with geometric multiplicities large enough
are detected as true eigenvalues and we get two out of four instances of the eigenvalue $1$, while all other eigenvalues are
identified as prescribed eigenvalues.
Since normally in Algorithm 2 we do not expect any prescribed eigenvalues, this
is a clear sign that \cm{the normal rank was computed incorrectly}.

\begin{table}[htb!]
\centering
\caption{Results of Algorithm~2 (projection to normal rank) applied to the singular
QEP 5) from Example~\ref{ex:DanielIvana} for $a=3$, using an underestimated value 4 for the normal rank.\label{tab:35under}}
\smallskip
{\footnotesize \begin{tabular}{c|clllll} \hline \rule{0pt}{2.3ex}%
$j$ & $\lambda_j$ & $\quad \ \gamma_j$ & $\quad \ \alpha_j$ & $\quad \ \beta_j$ & ${\rm gap}_j$ & Type \\[0.5mm]
\hline \rule{0pt}{2.5ex}%
1 & $\ph-1.000000 - 1.098315\cdot 10^{-15}i$ & $1.6\cdot 10^{-4}$ & $9.7\cdot 10^{-17}$ & $7.1\cdot 10^{-17}$ &
$8.9\cdot 10^{-12}$ & Finite true \\
2 & $\ph-1.000000 - 5.754158\cdot 10^{-13}i$ & $1.6\cdot 10^{-7}$ & $9.4\cdot 10^{-17}$ & $9.3\cdot 10^{-17}$ &
$8.9\cdot 10^{-12}$ & Finite true \\
3 & $-3.226097\cdot 10^{-2} - 2.206336\cdot 10^{-2}i$ & $1.9\cdot 10^{-1}$ & $2.5\cdot 10^{-1}$ & $1.4\cdot 10^{-1}$ &
$1.59$ & Prescribed \\
4 & $\ph-3.996238\cdot 10^{-2} + 2.377873\cdot 10^{-2}i$ & $1.1\cdot 10^{-1}$ & $1.6\cdot 10^{-1}$ & $9.5\cdot 10^{-2}$ &
$1.44$ & Prescribed \\[-1mm]
$\vdots$ & $\vdots$ & \multicolumn{1}{c}{$\vdots$} & \multicolumn{1}{c}{$\vdots$} & \multicolumn{1}{c}{$\vdots$} & \hspace{2mm} $\vdots$ & \multicolumn{1}{c}{$\vdots$}\\
8 & $\ph-2.901284 - 0.916448i$ & $8.5\cdot 10^{-8}$ & $1.2\cdot 10^{-5}$ & $1.6\cdot 10^{-5}$ & $0.69$ & Prescribed\\
\hline
\end{tabular}}
\end{table}

The results for the overestimated normal rank $6$ are presented in Table~\ref{tab:35over}.
The transformed problem is still singular because we project matrices to a size larger than the correct normal rank $5$. Now all
values $\alpha_j$ and $\beta_j$ are close to zero and all eigenvalues are identified as true eigenvalues. A
subset of computed eigenvalues is close to true finite eigenvalues and we can identify them in the second phase by Algorithm 4.
If we are sure that the initial PEP is singular, then the fact that all eigenvalues are identified as true eigenvalues, is a sign that
the normal rank is overestimated.

\begin{table}[htb!]
\centering
\caption{Results of Algorithm~2 (projection to normal rank) applied to the singular
QEP 5) from Example~\ref{ex:DanielIvana} for $a=3$, using an overestimated value 6 for the normal rank.\label{tab:35over}}
\smallskip
{\footnotesize \begin{tabular}{c|clllll} \hline \rule{0pt}{2.3ex}%
$j$ & $\lambda_j$ & $\quad \ \gamma_j$ & $\quad \ \alpha_j$ & $\quad \ \beta_j$ & ${\rm gap}_j$ & Type \\[0.5mm]
\hline \rule{0pt}{2.5ex}%
1 & $1.000000 + 4.385491\cdot 10^{-15}i$ & $2.2\cdot 10^{-5}$ & $1.6\cdot 10^{-16}$ & $3.0\cdot 10^{-17}$ &
$2.9\cdot 10^{-12}$ & Finite true \\
2 & $1.000000 + 1.649253\cdot 10^{-12}i$ & $5.6\cdot 10^{-8}$ & $1.7\cdot 10^{-16}$ & $3.1\cdot 10^{-17}$ &
$2.9\cdot 10^{-12}$ & Finite true \\
3 & $1.000000 + 1.782704\cdot 10^{-6}i$ & $3.8\cdot 10^{-14}$ & $1.4\cdot 10^{-16}$ & $3.1\cdot 10^{-17}$ &
$1.8\cdot 10^{-6}$ & Finite true \\
4 & $1.000000 - 1.782686\cdot 10^{-6}i$ & $3.8\cdot 10^{-14}$ & $1.3\cdot 10^{-16}$ & $3.9\cdot 10^{-17}$ &
$1.8\cdot 10^{-6}$ & Finite true \\
5 & $0$ & $3.6\cdot 10^{-17}$ & $2.3\cdot 10^{-16}$ & $1.8\cdot 10^{-16}$ & $3.17$ & Infinite true\\
6 & $0.116378 - 1.424291\cdot 10^{-2}i$ & $9.1\cdot 10^{-18}$ & $7.7\cdot 10^{-17}$ & $4.5\cdot 10^{-17}$ & $0.95$ & Infinite true \\[-1mm]
$\vdots$ & $\vdots$ & \multicolumn{1}{c}{$\vdots$} & \multicolumn{1}{c}{$\vdots$} & \multicolumn{1}{c}{$\vdots$} & \hspace{2mm} $\vdots$ & \multicolumn{1}{c}{$\vdots$}\\
11 & $2.999570 + 0.257650i$ & $3.9\cdot 10^{-19}$ & $1.5\cdot 10^{-16}$ & $7.5\cdot 10^{-17}$ & $0.67$ & Infinite true\\
12 & $\infty$ & $0.0$ & $6.1\cdot 10^{-17}$ & $4.1\cdot 10^{-17}$ & $1.00$ & Infinite true\\
\hline
\end{tabular}}
\end{table}
\end{example}
}

\ch{
\subsection{Rectangular matrix polynomials}
The methods can be applied to rectangular singular PEPs as well. For Algorithms 1 and 3 the solution is
to add zero rows or columns to make \cm{the underlying} matrices square, which does not change the regular eigenvalues. This is not
required for Algorithm 2, where we simply project the matrices to square matrices of dimension of the normal rank. If we
consider the computational costs, then Algorithm 2 is clearly the most effective approach, in particular if the matrices
are very skinny or fat.

As the principle is the same as for rectangular singular pencils, we do not give a particular example but refer to
\cite[Ex.~6.2]{HMP_SingGep} and
\cite[Ex.~7.6]{HMP_SingGep2}.}

\ch{
\subsection{Computational complexity}

The main computational task of Algorithms 1-3 is to solve the transformed regular PEP. If \cm{the underlying} matrices
are full and we use a standard solver such as, e.g., \texttt{polyeig} in Matlab, then all additional computations, including the
estimation of the normal rank and the computation of the projected matrices in Algorithm~2, present just a negligible part.
For large $n$, Algorithm~1 is just marginally more expensive than \texttt{polyeig}.
Algorithms 2 and 3 are expected to be faster and slower, respectively, due to smaller and larger matrices.
We remark that for QEPs Algorithm K\v{S}G from \cite{DanielIvana} is asymptotically two times as expensive as Algorithm~1, because it computes the eigenvalues of the perturbed regular QEP using two different linearizations.}

\section{Conclusions}\label{sec:conclusion}
We have studied singular polynomial eigenvalue problems, and extended results from singular pencils \cite{HMP_SingGep, HMP_SingGep2} to this new situation.

In the PEP context, we exploit the Smith form as in Theorem~\ref{the:smith} instead of the Kronecker normal form for pencils.
Since there is no Kronecker canonical form for matrix polynomials under strict equivalence, we need different proof arguments.
In addition, one of the main technical challenges of this paper is to show that the so-called random eigenvalues are generically simple.
While there is a relatively simple and elegant argument for this in the pencil case (see \cite[p.~1034]{HMP_SingGep}), this is much more complicated for the PEP; cf.~the proof in the appendix.

As applications, we have considered certain bivariate polynomial systems, ZGV points, and several scattered examples from \cite{DanielIvana} and other references.
We compared some of our results with those of \cite{DanielIvana}.
An advantage of \cite{DanielIvana} is that no normal rank computation is needed; on
the other hand, the methods introduced in this paper appear to be a bit more accurate.
While all approaches generally perform very well, especially the projection approach, which is often also the most efficient one, seems to be slightly more reliable and accurate.

\section*{Acknowledgments}
We thank both referees for very useful and insightful suggestions.
BP has been supported by the Slovenian Research and Innovation Agency (grants N1-0154, P1-0294).
Parts of the results have been obtained
while the first two authors visited the third author at the University of Ljubljana.
MH and CM thank BP and the Faculty of Mathematics and Physics for their hospitality.

\appendix
\renewcommand{\thealem}{A.\arabic{alem}}
\renewcommand{\theathm}{A.\arabic{athm}}

\section{Simple random eigenvalues}

Let $P(\lambda)$ be an $n\times n$ singular matrix polynomial of degree
$d$ and normal rank $n-k$, and
let $S_R(\lambda)=\left[x_1(\lambda)\ \dots\ x_{k}(\lambda)\right]$
be an $n\times k$ matrix polynomial, such that its columns form a minimal basis
of the right nullspace of
$P(\lambda)$
with right minimal indices $m_1, \dotsc,m_k$. In the proof of Lemma \ref{lem:Lo} it was
already shown that generically with respect to the entries of
$V^*\in\mathbb C^{k\times n}$ the polynomial $\det\big(V^*S_R(\lambda)\big)$
has exactly $M=m_1+\cdots+m_k$ roots. In this appendix we complete the proof that
generically these roots are simple by providing one particular example for $V$
such that the roots of $\det\big(V^*S_R(\lambda)\big)$ are simple.

Recall that if $p_1(\lambda)$ and $p_2(\lambda)$ are two scalar polynomials, then their \emph{greatest common divisor} ${\rm gcd}\big(p_1(\lambda),p_2(\lambda)\big)$ is
a polynomial of maximal degree that divides both $p_1(\lambda)$ and $p_2(\lambda)$.
For more than two polynomials, \[{\rm gcd}\big(p_1(\lambda), \dotsc, p_k(\lambda)\big)=
 {\rm gcd}\big({\rm gcd}(p_1(\lambda), \dotsc, p_{k-1}(\lambda))\,,\,p_k(\lambda)\big).\]
It is well known that the greatest common divisor is unique up to multiplication with
a nonzero constant.

\begin{alem}\label{lem:newk}
 Let $p_1(\lambda), \dotsc, p_k(\lambda)$ be such that their greatest common divisor
 $q(\lambda)={\rm gcd}\big(p_1(\lambda), \dotsc, p_k(\lambda)\big)$ has simple roots.
 Then there exists a generic set $\Omega\subseteq\mathbb C^k$ such that for all
 $s=(s_1, \dotsc,s_k)\in\Omega$ the roots of the polynomial
 $s_1\,p_1(\lambda)+\cdots+s_k\,p_k(\lambda)$ are simple.
\end{alem}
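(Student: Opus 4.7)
The plan is to reduce to a coprime situation. Write $p_i(\lambda) = q(\lambda)\tilde p_i(\lambda)$, so that by definition of $q$ the quotients satisfy $\gcd(\tilde p_1,\dots,\tilde p_k)=1$, and hence $\sum s_i p_i(\lambda) = q(\lambda)\,g_s(\lambda)$ with $g_s(\lambda):=\sum s_i \tilde p_i(\lambda)$. Since $q$ has only simple roots by hypothesis, the roots of $\sum s_i p_i$ will all be simple exactly when (i) no root of $q$ is also a root of $g_s$, and (ii) $g_s$ itself has only simple roots. Condition (i) is easy: for each of the finitely many roots $\mu$ of $q$, the linear form $s\mapsto g_s(\mu)=\sum s_i\tilde p_i(\mu)$ is nontrivial in $s$ because coprimality forces some $\tilde p_i(\mu)$ to be nonzero, so the bad set for (i) is a finite union of hyperplanes in $\CC^k$.

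Condition (ii) is the main step. I would introduce the incidence variety $I=\{(\mu,s)\in \CC\times\CC^k : g_s(\mu)=g_s'(\mu)=0\}$. Its fiber $I_\mu$ over $\mu\in\CC$ is the common kernel of the linear forms $s\mapsto \sum s_i\tilde p_i(\mu)$ and $s\mapsto \sum s_i\tilde p_i'(\mu)$, so $\dim I_\mu = k - \rank M(\mu)$, where $M(\mu)$ is the $2\times k$ matrix with rows $(\tilde p_i(\mu))_i$ and $(\tilde p_i'(\mu))_i$. Coprimality gives $\rank M(\mu)\ge 1$ for every $\mu\in\CC$. The crucial subclaim, and the place where the real work lies, is that $\rank M(\mu)=2$ except for finitely many $\mu$. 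If this failed, then every $2\times 2$ minor $W_{ij}(\lambda):=\tilde p_i\tilde p_j'-\tilde p_j\tilde p_i'$ would vanish identically as a polynomial in $\lambda$. Picking any nonzero $\tilde p_i$, the rational function $\tilde p_j/\tilde p_i$ would then have zero derivative in $\CC(\lambda)$ and hence be a constant, so each $\tilde p_j$ would be a scalar multiple of $\tilde p_i$; coprimality would then force $\tilde p_i$ to be a nonzero constant, so all $\tilde p_i$ are constants and $g_s$ is a generically nonzero constant with no roots, which trivially satisfies~(ii).

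In the main case some $W_{ij}$ is a nonzero polynomial in $\lambda$, so $\rank M(\mu)=2$ off its finite zero set, and a straightforward incidence-variety dimension count gives $\dim I\le \max\{1+(k-2),\,0+(k-1)\}=k-1$. The image of the projection $\pi\colon I\to \CC^k$ therefore lies in a proper algebraic subset of $\CC^k$; for $s$ outside this subset, $g_s$ has no multiple roots. Intersecting the complement with the finitely many hyperplanes from condition~(i) produces the desired generic set $\Omega$. The main obstacle throughout is the rank-drop subclaim, which is handled by the polynomial Wronskian identity above; everything else is a routine genericity argument.
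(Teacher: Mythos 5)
Your proof is correct and follows the same overall strategy as the paper: factor out $q$ to reduce to the coprime case, split the simplicity requirement into the two conditions (i) and (ii), and then observe that the bad set for (ii) is governed by the rank of the $2\times k$ matrix whose rows are $\big(\tilde p_i(\mu)\big)_i$ and $\big(\tilde p_i'(\mu)\big)_i$. The place where you genuinely diverge from the paper is the ``rank-drop'' subclaim. The paper picks a specific pair $h_1, h_2$ (with $h_1$ having a root $\zeta$ at which $h_2$ does not vanish), factors out $g=\gcd(h_1,h_2)$, and derives a contradiction from $r_1\mid r_1'$ --- an elementary but slightly laborious argument, and incidentally the displayed minor in the paper contains a sign typo ($h_1h_2'-h_1h_2'$ instead of $h_1h_2'-h_1'h_2$). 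Your version is cleaner: if \emph{all} $2\times 2$ minors $W_{ij}=\tilde p_i\tilde p_j'-\tilde p_j\tilde p_i'$ vanish identically, then for a fixed nonzero $\tilde p_i$ the quotient rule makes every $\tilde p_j/\tilde p_i$ a constant in $\CC(\lambda)$, and coprimality collapses everything to constants --- exactly the degenerate case you have already dispatched. This buys you a more uniform argument that does not need to single out a privileged pair. The second difference is packaging: you run an incidence-variety dimension count for $I\subseteq\CC\times\CC^k$ and invoke the fact that its projection lies in a proper algebraic subset of $\CC^k$, while the paper more concretely writes the bad locus as a finite union of hyperplanes $s_1h_1(\xi_i)+\cdots+s_kh_k(\xi_i)=0$. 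Both deliver genericity in the paper's sense; the paper's version is more explicit about what the exceptional set looks like, whereas yours is marginally shorter. One small remark: for the constructible-image step you implicitly use that the Zariski closure of $\pi(I)$ has dimension at most $\dim I\le k-1$, which is standard but worth saying if one wants the argument to be self-contained.
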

\begin{proof}
Let $p_i(\lambda)=q(\lambda)\,h_i(\lambda)$ for $i=1,\ldots,k$, where
${\rm gcd}\big(h_1(\lambda), \dotsc, h_k(\lambda)\big)=1$.
Let $\eta_1, \dotsc,\eta_\ell\in\mathbb C$, where $\ell$ is the degree of $q(\lambda)$,
be the roots of $q(\lambda)$, that are simple by assumption.

If $h_1(\lambda),\ldots,h_k(\lambda)$ are all constant polynomials, then
$p_1(\lambda),\ldots,p_k(\lambda)$ are all multiples of $q(\lambda)$ and hence for any
$s\in\mathbb C^k$
such that $s_1h_1(\lambda)+\cdots+s_kh_k(\lambda)\ne 0$
the roots of $s_1p_1(\lambda)+\cdots+s_kp_k(\lambda)$
are $\eta_1, \dotsc,\eta_\ell$ and thus simple.

Assume now that the degree of at least one of the polynomials $h_1(\lambda),\ldots,h_k(\lambda)$ is nonzero. Suppose that $\xi$ is a multiple zero of $s_1h_1(\lambda)+\cdots + s_kh_k(\lambda)$ for
some $s \in \CC^k$. Then
$s_1h_1(\xi)+\cdots+ s_kh_k(\xi)=0$ and $s_1h_1'(\xi)+\cdots+s_k h_k'(\xi)=0$. It follows that $s$ is in the kernel of the $2\times k$ polynomial matrix
\begin{equation}\label{eq:mult_root_criteria2}
 H(\xi):=\left[\begin{matrix} h_1(\xi) & \cdots & h_k(\xi)\cr
 h_1'(\xi) & \cdots & h_k'(\xi)\end{matrix}\right]
\end{equation}
that does not depend on $s$. We know that
$\big(h_1(\xi),\ldots,h_k(\xi)\big)\ne (0,\ldots,0)$ for each $\xi\in\CC$ because
${\rm gcd}\big(h_1(\lambda), \dotsc, h_k(\lambda)\big)=1$, so
$\rank\big(H(\xi)\big)\ge 1$ for all $\xi$.

Let us show that $\nrank\big(H(\xi)\big)=2$. Without loss of generality we can assume
that degree of $h_1(\lambda)$ is at least one, so there exists $\zeta\in\CC$ such that
$h_1(\zeta)=0$. Since ${\rm gcd}\big(h_1(\lambda), \dotsc, h_k(\lambda)\big)=1$, at least
one of the remaining polynomials, which we can assume to be $h_2(\lambda)$, is
nonzero at $\zeta$. Now we consider the following polynomial, which is a minor of
$H(\lambda)$,
\begin{equation}\label{lem:nov_dokaz}
h_1(\lambda)\,h_2'(\lambda)-h_1(\lambda)\,h_2'(\lambda).
\end{equation}
Let $h_i(\lambda)=g(\lambda)r_i(\lambda)$ for $i=1,2$, where
$g(\lambda)={\rm gcd}\big(h_1(\lambda),h_2(\lambda)\big)$ and
${\rm gcd}\big(r_1(\lambda),r_2(\lambda)\big)=1$. We know that
$r_1(\lambda)$ is not a constant polynomial and that $r_1(\zeta)=0$.
Suppose now that \eqref{lem:nov_dokaz} is identically zero. Then
\[
g(\lambda)\,r_1(\lambda)\,\big(g(\lambda)\,r_2'(\lambda)+g'(\lambda)\,r_2(\lambda)\big) = g(\lambda)\,r_2(\lambda)\,\big(g(\lambda)\,r_1'(\lambda)+g'(\lambda)\,r_1(\lambda)\big)
\]
and it follows that $r_1(\lambda)\,r_2'(\lambda)=r_1'(\lambda)\,r_2(\lambda)$. But then
it follows from ${\rm gcd}\big(r_1(\lambda),r_2(\lambda)\big)=1$ that
$r_1(\lambda)$ divides its derivative $r_1'(\lambda)$, which is not possible.
Therefore \eqref{lem:nov_dokaz} is not identically zero and,
since this is a minor of $H(\lambda)$, this shows that $\nrank\big(H(\lambda)\big)=2$.

Then there are only finitely many
values $\xi_1,\ldots,\xi_r$ such that $\rank\big(H(\xi_i)\big)=1$ for $i=1,\ldots,r$. This is how we get $r+\ell$ hyperplanes $s_1h_1(\xi_i)+\cdots+ s_kh_k(\xi_i)=0$, $i=1,\ldots,r$, and
$s_1h_1(\eta_j)+\cdots+ s_kh_k(\eta_j)=0$, $j=1,\ldots,\ell$.
If $s$ is not on any of these hyperplanes, the roots of $s_1h_1(\lambda)+\cdots + s_kh_k(\lambda)$ are simple and different from $\eta_1,\ldots,\eta_\ell$.
The roots of $s_1p_1(\lambda)+\cdots + s_kp_k(\lambda)$ are then simple, and since
hyperplanes are algebraic sets, this holds generically for $s\in\CC^k$.\qed
\end{proof}

Let the columns of $S_R(\lambda)=\left[x_1(\lambda)\ \dots\ x_{k}(\lambda)\right]$
form a minimal basis of the right nullspace of an $n\times n$ singular
matrix polynomial
$P(\lambda)$ of degree $d$ and normal rank $n-k$ with minimal indices
$m_1, \dotsc,m_k$, where we can assume that $m_1\le \cdots \le m_k$.
Then $x_i(\lambda)$ is a polynomial of degree $m_i$ and we can write
$x_i(\lambda) = x_i^{(0)} + \lambda x_{i}^{(1)} + \cdots+
\lambda^{m_i}x_{i}^{(m_i)}$, where
$x_i^{(0)}, \dotsc, x_i^{(m_i)}\in\CC^n$ for $i=1, \dotsc,k$. Then from \cite{Forney_minimal_basis} we know the following:
\begin{enumerate}
 \item[1)] The \emph{high-order coefficient matrix} $S_h=[\,x_1^{(m_1)}\ \dots\ x_k^{(m_k)}\,]$ has full rank.
 \item[2)] $S_R(\lambda)$ has full rank for all $\lambda\in\CC$; in particular
 $S_R(0)= [\,x_1^{(0)}\ \dots\ x_k^{(0)}\,]$ has full rank.
 \item[3)] The greatest common divisor of all $k\times k$ minors of $S_R(\lambda)$ is $1$.
 \item[4)] We have ${\rm gcd}\big(x_{i1}(\lambda),\dots,x_{in}(\lambda)\big)=1$
 for all $i=1, \dotsc,k$.
\end{enumerate}

\begin{alem}\label{cnj:k3}
Let $S_R(\lambda)=[\,x_1(\lambda)\ \dots\ x_{k}(\lambda)\,]$
be an $n\times k$ matrix polynomial, such that its columns form a minimal basis
of the right nullspace of
$P(\lambda)$
with right minimal indices $m_1, \dotsc,m_k$, where $k\ge 2$.
Then, there exists vectors $v_1,\dots,v_{k-1}\in\mathbb C^{n}$ such that
\begin{equation}\label{eq:rank_def}
\rank\big([v_1\,\dots\,v_{k-1}]^*S_R(\lambda)\big)=k-1\ \textrm{for all }\ \lambda\in\CC.
\end{equation}
\end{alem}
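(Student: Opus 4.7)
The plan is to build $v_1,\dots,v_{k-1}$ iteratively, choosing each $v_j$ generically so that the stacked matrix $V_j^*S_R(\lambda)$, with $V_j:=[v_1\ \dots\ v_j]$, already has rank $j$ at every $\lambda\in\CC$; after $k-1$ steps this gives the required matrix of rank $k-1$ throughout. Writing $K_j:=\ker V_j^*\subseteq\CC^n$ (with $K_0:=\CC^n$) and $R(\lambda_0):=\mathrm{range}\,S_R(\lambda_0)$, which is $k$-dimensional for all $\lambda_0\in\CC$ by property 2) above, the identity $\rank(V_j^*S_R(\lambda_0))=k-\dim(R(\lambda_0)\cap K_j)$ reformulates the target as $\dim(R(\lambda_0)\cap K_{k-1})=1$ for every $\lambda_0$.

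For the inductive step I would assume $\dim(R(\lambda_0)\cap K_{j-1})=k-j+1$ at every $\lambda_0\in\CC$ and look for $v_j$ so that intersecting further with $\ker v_j^*$ drops the intersection dimension by exactly one. This drop fails at some $\lambda_0$ precisely when $v_j$ is orthogonal to $R(\lambda_0)\cap K_{j-1}$, so the forbidden vectors form
\[
B_j := \bigcup_{\lambda_0\in\CC}\bigl(R(\lambda_0)\cap K_{j-1}\bigr)^\perp\subseteq\CC^n.
\]
Characterizing membership in $B_j$ through the rank condition that $v^*S_R(\lambda)$ lies in the row span of $V_{j-1}^*S_R(\lambda)$ shows that the incidence set inside $\CC^n\times\CC$ is cut out by polynomial (minor) equations and hence algebraic. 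Each slice $(R(\lambda_0)\cap K_{j-1})^\perp$ is a linear subspace of dimension $n-(k-j+1)$, so by fibering over the $1$-dimensional parameter space $\CC$ the projection of the incidence variety to $\CC^n$ is a constructible set of dimension at most $(n-k+j-1)+1=n-k+j$. For $j\le k-1$ this gives $\dim B_j\le n-1<n$.

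Consequently the Zariski closure of $B_j$ is a proper algebraic subset of $\CC^n$, so a generic $v_j\in\CC^n$ avoids $B_j$ and completes the inductive step. The base case $j=1$ (with $K_0=\CC^n$) simply asks $v_1$ not to lie in $R(\lambda_0)^\perp$ for any $\lambda_0\in\CC$, which is possible since $k\ge 2$ gives $\dim B_1\le n-k+1\le n-1$; iterating for $j=1,\dots,k-1$ yields the desired $v_1,\dots,v_{k-1}$ with $\rank([v_1\ \dots\ v_{k-1}]^*S_R(\lambda))=k-1$ at every $\lambda\in\CC$. The main technical obstacle is the dimension count combined with the appeal to Chevalley's theorem, which is what guarantees that the bad sets $B_j$ are constructible and thus that ``generic $v_j$'' is a genuine Zariski-generic statement; once that is secured, the rest is routine linear algebra and book-keeping of intersection dimensions.
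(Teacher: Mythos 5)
Your argument is correct, and it differs substantially from the paper's. You both proceed by induction on the number of rows $j$, but the mechanisms are genuinely different. The paper, for each $j$, first picks a generic candidate vector, observes that the set of ``bad'' values $\lambda_0$ (where the rank drops) is finite, and then iteratively \emph{removes} those finitely many bad values one at a time by small rank-one corrections of the candidate, relying on continuity of polynomial roots to avoid creating new bad values. You instead flip the quantifier order: you bound, for each $j$, the set $B_j$ of ``bad'' \emph{vectors} $v$, showing it is the image under projection of an incidence variety in $\CC^n\times\CC$ whose fibers over $\lambda_0$ are exactly the linear spaces $(R(\lambda_0)\cap K_{j-1})^\perp$ of dimension $n-k+j-1$; by Chevalley and the fiber-dimension bound, $\dim B_j\le n-k+j<n$ for $j\le k-1$, so a generic $v_j$ works. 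Your reduction of the orthogonality condition $v\perp R(\lambda_0)\cap K_{j-1}$ to vanishing of the $j\times j$ minors of $[V_{j-1}\ v]^*S_R(\lambda_0)$ (valid because the inductive hypothesis guarantees $\rank(V_{j-1}^*S_R(\lambda_0))=j-1$ for every $\lambda_0$) is exactly what makes the incidence set algebraic, and you correctly use that $S_R(\lambda_0)$ has full column rank for every $\lambda_0$ to get $\rank(V^*S_R(\lambda_0))=k-\dim(R(\lambda_0)\cap\ker V^*)$. What your route buys is conceptual clarity and a genuine genericity statement (the good $v_j$ form a Zariski-open dense set at each step, not just a nonempty set), at the cost of appealing to Chevalley's theorem and fiber-dimension theory; the paper's route is more elementary, using only root-counting and continuity, but requires more careful bookkeeping in the iterative perturbation to certify that previously fixed $\lambda$'s stay fixed. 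Both are valid proofs of the existence claim.
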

\begin{proof}
We apply induction and start with $k=2$. Thus, we aim to find $v_1\in\mathbb C^n$
such that
$\operatorname{rank}\big([v_1^*x_1(\lambda) \ \ v_1^*x_2(\lambda)]\big) = 1$
for all $\lambda\in\mathbb C$.
Generically with respect to the entries of $v_1$ the polynomials $v_1^*x_1(\lambda)$
and $v_1^*x_2(\lambda)$ will have $m_1$ and $m_2$ \ch{simple} roots, respectively. Let us select
such a $v_1\in\mathbb C^n$ and let $\alpha_1,\dots,\alpha_{m_1}$ denote the roots of
$v_1^*x_1(\lambda)$ and $\beta_1,\dots,\beta_{m_2}$ those of $v_1^*x_2(\lambda)$.
(We highlight that enforcing the maximal possible degree of each polynomial
$v_{i}^*x_j(\lambda)$ ensures that the number of roots remains invariant under sufficiently small perturbations.)
Suppose that the roots are ordered in such a way that
\[
\alpha_i=\beta_i\quad\mbox{for }i=1,\dots,r\quad\mbox{ and }\quad
\ch{\alpha_j\neq\beta_\ell\quad\mbox{for }j=r+1,\dots,m_1,\ \ell=r+1,\dots,m_2.}
\]
In other words, we have exactly $r$ solutions $\lambda_i:=\alpha_i=\beta_i$,
$i=1,\dots,r$, to the equation
\begin{equation}\label{eq:induction2}
 v_1^*x_1(\lambda)=v_1^*x_2(\lambda)=0.
\end{equation}
We aim for $r=0$, because then we have
$\operatorname{rank}\big([v_1^*x_1(\lambda) \ \ v_1^*x_2(\lambda)]\big) = 1$
for all $\lambda\in\mathbb C$. Suppose this is not the case, i.e., $r>0$.
Since $x_1(\lambda_1)$ and $x_2(\lambda_1)$ are linearly independent, there
exists $w_1\in\CC^n$ such that $w_1^*x_1(\lambda_1)=0$ and
$w_1^*x_2(\lambda_1)\ne 0$. If we replace $v_1$ by $v_1+\delta_1 w_1$, then
for sufficiently small $\delta_1>0$ the polynomials $(v_1+\delta_1 w_1)^*x_j(\lambda)$ will still have $m_1$ zeros $\alpha_1,\dots,\alpha_{m_1}$ and $m_2$ zeros $\beta_1,\dots,\beta_{m_2}$, respectively, where we dropped
the dependency of the roots on $\delta_1$ for convenience. Note that we still have $\lambda_1=\alpha_1$ by construction. Furthermore,
we have $(v_1+\delta_1 w_1)^*[x_1(\lambda_1)\ \ x_2(\lambda_1)]=
[0\ \ \delta_1 w_1^*x_2(\lambda_1)]\ne[0\ \ 0]$ which means that
$\lambda_1$ is no longer a solution of \eqref{eq:induction2}.
Due to the continuity of zeros, we still have $\alpha_i\neq\beta_i$ for
$i=r+1,\dots,\min(m_1,m_2)$, so there can now be at most $r-1$ solutions of
\eqref{eq:induction2} if there still are any. We can remove those
in the same way one by one by further perturbing $v_1$. Then we have found
one particular vector for which~\eqref{eq:rank_def} holds in the case $k=2$.

Assume now that the statement holds for $k-1$. Then for
$\widehat S_R(\lambda)=[\,x_1(\lambda)\ \dots\ x_{k-1}(\lambda)\,]$
there exist $k-2$ vectors $v_1,\dots,v_{k-2}\in\mathbb C^n$ such that
\begin{equation}\label{eq:11.6.24}
\operatorname{rank}\big([v_1\ \dots \ v_{k-2}]^*\widehat S_R(\lambda)\big)=k-2
\end{equation}
for all $\lambda\in\mathbb C$. Without loss of generality we may assume that each
$v_i^*x_j(\lambda)$ has degree $m_j$ as a polynomial in $\lambda$. Note that this is
a generic condition, so if, by chance, this is not satisfied, then a small perturbation
of the vectors $v_1,\dots,v_{k-2}$ will ensure that the condition is satisfied. If the
perturbation is small enough then this does not change the rank condition~\eqref{eq:11.6.24}. Next, select $v_{k-1}\in\mathbb C^n$
such that $v_{k-1}^*x_j(\lambda)$ has $m_j$ zeros for $j=1,\dots,k$. (Note that
the latter condition is again generic, so such an $v_{k-1}$ exists.) Then by an
argument similar to the one in the case $k=2$ there can be only finitely many distinct
values $\lambda_1, \dotsc,\lambda_r$ that are solutions of
\begin{equation}\label{eq:inductionk}
\rank\big([v_1\ \dots\ v_{k-1}]^*S_R(\lambda)\big) = k-2.
\end{equation}
Indeed, we have that the rank in~\eqref{eq:inductionk} is less then $k-1$ if all
$(k-1)\times (k-1)$ minors are zero and there
is only a finite number of common zeros of these polynomials. Also note
that the rank in~\eqref{eq:inductionk} cannot be smaller than $k-2$,
because of~\eqref{eq:11.6.24}. In a same way as for $k=2$, if $r=0$, we are done.
Otherwise, since the vectors $x_1(\lambda_1), \dotsc, x_k(\lambda_1)$ are linearly
independent, there exists vector $w_{k-1}$ such that
$w_{k-1}^*x_j(\lambda_1)=0$ for $j=1, \dotsc,k-\ch{1}$ and
$w_{k-1}^*x_{\ch{k}}(\lambda_1)\ne 0$.
For a sufficiently small $\delta_{k-1}>0$ we then have
\begin{align}
[v_1\ \dots\ v_{k-2}\ \ v_{k-1} + \delta_{k-1} w_{k-1}]^*S_R(\lambda_1)\nonumber
& = [v_1\ \dots\ v_{k-1}]^*S_R(\lambda_1)\\
& \phantom{MMM} + \delta_{k-1} \,w_{k-1}^*x_{k-1}(\lambda_1)\,e_{k-1} \,e_{k}^T\label{eq:rank_is_full}
\end{align}
and the rank of \eqref{eq:rank_is_full} is $k-1$. If we
replace $v_{k-1}$ by $v_{k-1} + \delta_{k-1} w_{k-1}$
then $\lambda_1$ is not a solution of \eqref{eq:inductionk} anymore
and due to the continuity of zeros, there can now be at most $r-1$ values $\lambda$ such
that~\eqref{eq:inductionk} holds. We can remove them in the same way one by one by
further perturbing $v_{k-1}$. This delivers the desired example
for which \eqref{eq:rank_def} holds.\qed
\end{proof}

\begin{athm}\label{cnj:need_to_show}
 Let $P(\lambda)$ be an $n\times n$ singular matrix polynomial of degree
$d$ and normal rank $n-k$, and
let $S_R(\lambda)=\left[x_1(\lambda)\ \dots\ x_{k}(\lambda)\right]$
be an $n\times k$ matrix polynomial, such that its columns form a minimal basis
of the right nullspace of
$P(\lambda)$
with right minimal indices $m_1, \dotsc,m_k$.
Then, there exists $V\in\mathbb C^{n\times k}$ such that the polynomial
$\det\big(V^*S_R(\lambda)\big)$ has exactly $M=m_1+\cdots+m_k$ \emph{simple} zeros.
\end{athm}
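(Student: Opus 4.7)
The plan is to construct the required $V = [v_1\ \dots\ v_k]$ column by column, reducing the multivariate problem to the scalar-coefficient situation of Lemma~\ref{lem:newk}. The case $k = 1$ is essentially immediate: property~4 of minimal bases yields $\gcd(x_{1,1}(\lambda), \dots, x_{1,n}(\lambda)) = 1$, so Lemma~\ref{lem:newk} applied with $p_i = x_{1,i}$ produces a generic $v_1$ making $v_1^* x_1(\lambda)$ have only simple roots, which is all we need.

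For $k \ge 2$, I would first invoke Lemma~\ref{cnj:k3} to fix columns $v_1, \dots, v_{k-1}$ such that the $(k-1) \times k$ matrix polynomial $T(\lambda) := [v_1\ \dots\ v_{k-1}]^* S_R(\lambda)$ has rank $k-1$ at every $\lambda \in \mathbb{C}$, and then choose the last column $v_k$ so as to obtain simple roots. Expanding along the last row gives
\[
\det\big([v_1\ \dots\ v_k]^* S_R(\lambda)\big) = v_k^* c(\lambda), \qquad c(\lambda) := \sum_{j=1}^{k} (-1)^{k+j} M_j(\lambda)\, x_j(\lambda),
\]
where $M_j(\lambda)$ is the $(k-1) \times (k-1)$ minor of $T(\lambda)$ obtained by deleting the $j$-th column. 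Equivalently $c(\lambda) = S_R(\lambda)\,w(\lambda)$, where the vector $w(\lambda) \in \mathbb{C}^k$ of signed minors spans the right nullspace of $T(\lambda)$.

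The crucial intermediate step is to show that $c(\lambda) \neq 0$ for every $\lambda \in \mathbb{C}$. By Lemma~\ref{cnj:k3} at least one minor $M_j(\lambda_0)$ is nonzero at any prescribed $\lambda_0 \in \mathbb{C}$, so $w(\lambda_0) \neq 0$; combined with property~2 of minimal bases, which says that $S_R(\lambda_0)$ has full column rank at every $\lambda_0$, this forces $c(\lambda_0) \neq 0$. Hence the entries $c_1(\lambda), \dots, c_n(\lambda)$ of $c(\lambda)$ have no common zero and $\gcd(c_1, \dots, c_n) = 1$. Now Lemma~\ref{lem:newk} applied to $p_i = c_i$, with the parameter vector identified with $\overline{v_k}$ (which ranges freely as $v_k$ varies over $\mathbb{C}^n$), implies that generically in $v_k$ the scalar polynomial $v_k^* c(\lambda) = \det(V^* S_R(\lambda))$ has only simple roots. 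Intersecting this generic set with the generic set supplied in the proof of Lemma~\ref{lem:Lo} on which $\det(V^* S_R(\lambda))$ has degree exactly $M$ yields a $V$ with the desired property.

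The main obstacle I expect is establishing the nowhere-vanishing of $c(\lambda)$: this is where the two appendix ingredients have to be combined just right, and it is exactly what allows the passage from a matrix determinant to the scalar-combination setting of Lemma~\ref{lem:newk}. Once that is in place the argument is essentially bookkeeping, and in particular the case distinction $k=1$ versus $k\ge 2$ is purely due to Lemma~\ref{cnj:k3} requiring $k \ge 2$.
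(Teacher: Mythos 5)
Your argument is correct in its essence but takes a genuinely different and more direct route than the paper's. The paper also fixes $v_1,\dots,v_{k-1}$ via Lemma~\ref{cnj:k3}, but then builds \emph{two} determinant polynomials $g_1(\lambda)=\det\big([v_1\dots v_{k-1}\ v_k]^*S_R(\lambda)\big)$ and $g_2(\lambda)=\det\big([v_1\dots v_{k-1}\ w]^*S_R(\lambda)\big)$ of degree $M$, shows that generically in $w$ their root sets are disjoint so that $\gcd(g_1,g_2)=1$, and only then invokes Lemma~\ref{lem:newk} with two inputs to obtain a suitable combination $\overline{s_1}v_k+\overline{s_2}w$. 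Your cofactor expansion along the last row, $\det(V^*S_R(\lambda))=v_k^*c(\lambda)$ with $c(\lambda)=S_R(\lambda)\,w(\lambda)$, linearizes the determinant in $v_k^*$ right away, and the observation that $w(\lambda)\neq 0$ everywhere (Lemma~\ref{cnj:k3}) combined with $S_R(\lambda)$ having full column rank gives $\gcd(c_1,\dots,c_n)=1$ without any coprimality game; one application of Lemma~\ref{lem:newk} with $n$ scalar polynomials then finishes the simplicity argument. This is cleaner and avoids the second auxiliary vector $w$ and the disjoint-roots argument entirely.

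The one genuine gap is in your final degree-$M$ step. The generic set from Lemma~\ref{lem:Lo} on which $\det(V^*S_R(\lambda))$ has degree $M$ lives in $\CC^{n\times k}$, but after fixing $v_1,\dots,v_{k-1}$ you are working only in the $\CC^n$ of choices of $v_k$; the two sets are not directly intersectable, and in fact it can happen that for the fixed $v_1,\dots,v_{k-1}$ from Lemma~\ref{cnj:k3} the polynomial $v_k^*c(\lambda)$ has degree $<M$ for \emph{every} $v_k$. Concretely, $\deg\big(v_k^*c(\lambda)\big)=M$ is a generic condition in $v_k$ if and only if the leading coefficient $c_h$ of $c(\lambda)$ is nonzero, and $c_h=S_h\cdot(\text{cofactor vector of }[v_1\dots v_{k-1}]^*S_h)$ is nonzero exactly when the $(k-1)\times k$ matrix $[v_1\dots v_{k-1}]^*S_h$ has rank $k-1$, where $S_h$ is the high-order coefficient matrix of $S_R$. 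This rank condition is \emph{not} a consequence of the pointwise rank condition in Lemma~\ref{cnj:k3}, since $S_h$ is not $S_R$ evaluated at any point. The fix is the same one the paper uses for its own degree claim: a sufficiently small perturbation of $v_1,\dots,v_{k-1}$ achieves $\rank\big([v_1\dots v_{k-1}]^*S_h\big)=k-1$ while preserving the open condition $\rank\big([v_1\dots v_{k-1}]^*S_R(\lambda)\big)=k-1$ for all $\lambda$. With that remark inserted your proof is complete.
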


\begin{proof}
For $k=1$ we can take $V=v\in\CC^n$ and let
$g(\lambda):=V^*S_R(\lambda)=\sum_{j=1}^{m_1}{v_j^*x_{1j}(\lambda)}$. Since
${\rm gcd}\big(x_{11}(\lambda), \dotsc, x_{1m_1}(\lambda)\big)=1$, it follows from
Lemma \ref{lem:newk} that generically the polynomial $g(\lambda)$ has exactly $m_1$ simple zeros.

For $k\ge 2$, select $v_1, \dotsc,v_{k-1}\in\CC^n$ satisfying~\eqref{eq:rank_def}
according to Lemma~\ref{cnj:k3} and an additional $v_k\in\mathbb C^n$, and set
$G_1(\lambda)=[v_1\ \dots\ v_k]^*S_R(\lambda)$. Without loss of generality
we may assume that $g_1(\lambda)=\det(G_1(\lambda))$
has degree $M=m_1+\cdots+m_k$. Indeed, since this is a generic condition, we can
slightly perturb $v_1,\dots,v_{k}$ to meet the condition, in the case that, by chance,
this is not satisfied. If the perturbation is sufficiently small, then this will not
change the rank condition~\eqref{eq:rank_def}. It follows that $g_1$ has exactly
$M$ roots $\xi_1, \dotsc,\xi_M$. At this point, we do not know if the roots are
pairwise distinct. Therefore, we next consider
$G_2(\lambda)=[v_1\ \dots\ v_{k-1}\ \ w]^*S_R(\lambda)$,
where the vectors $v_1, \dotsc,v_{k-1}$ are as before and $w\in\CC^n$. Again, we can
assume without loss of generality that $g_2(\lambda)=\det G_2(\lambda)$ has degree $M$.
By construction, the first $k-1$ rows of $G_2(\lambda)$ have full rank for all $\lambda$.
Consequently, a fixed value $\xi$ will generically not be among
the roots of $g_2(\lambda)=\det(G_2(\lambda))$ as a polynomial in $\lambda$ for
$i=1, \dotsc,M$. Indeed, since $S_R(\xi)$ has full rank $k$, the range of
$S_R(\xi)^*$ is $k$-dimensional and $S_R(\xi)^*v_1,\dots,S_R(\xi)^*v_{k-1}$
are linearly independent. Choosing $w$ such that $S_R(\xi)^*w$ completes these
vectors to a basis of the range of $S_R(\xi)^*$ proves that $G_2(\xi)$ is
nonsingular. But then the set of all $w$ for which $g_2(\xi)\neq 0$ is a
generic set. Since the intersection of finitely many generic sets is still generic, it follows that we can generically exclude all values $\xi_1, \dotsc,\xi_M$ from the
zeros $\eta_1, \dotsc,\eta_M$ of $g_2(\lambda)$.

It follows that ${\rm gcd}\big(g_1(\lambda),g_2(\lambda)\big)=1$ and we can apply Lemma
\ref{lem:newk}. Therefore, there exists $(s_1,s_2)\in\CC^2$ such that
$s_1g_1(\lambda)+ s_2 g_2(\lambda)$ has $M$ simple zeros. If we take
$V=[v_1\ \dots\ v_{k-1}\ \ \overline{s_1}v_k+\overline{s_2}w]$, then
$\det\big(V^*R_S(\lambda)\big)$ has $M$ simple roots which completes the proof.\qed
\end{proof}

\end{document}